\documentclass{amsart}

\usepackage[latin1]{inputenc}
\usepackage[english]{babel}
\usepackage{indentfirst}
\usepackage{amssymb}
\usepackage{amsthm}
\usepackage{xcolor}
\usepackage[all]{xy}

\def\epsilon{\varepsilon}

\newtheorem{theorem}{Theorem}[section]
\newtheorem{proposition}[theorem]{Proposition}
\newtheorem{corollary}[theorem]{Corollary}
\newtheorem{lemma}[theorem]{Lemma}

\theoremstyle{definition}
\newtheorem{definition}[theorem]{Definition}

\newtheorem{remark}[theorem]{Remark}

\def\11{\textbf{$1$}}

\begin{document}
\title[$(p,q)$-regular operators]{$(p,q)$-regular operators between Banach lattices }

\author{E. A. S\'{a}nchez P\'{e}rez}
\address{E. A. S\'{a}nchez P\'{e}rez\\ Instituto Universitario de Matem\'atica Pura y Aplicada\\  Universitat Polit\`ecnica de Val\`encia\\ 46022 Valencia. Spain.}
\email{easancpe@mat.upv.es}

\author[P. Tradacete]{P. Tradacete}
\address{P. Tradacete\\Department of Mathematics\\ Universidad Carlos III de Madrid\\ 28911, Legan\'es, Madrid, Spain.}
\email{ptradace@math.uc3m.es}

\thanks{E.A. S\'anchez P\'erez gratefully acknowledges support of Spanish Ministerio de Econom\'{\i}a, Industria y Competitividad under project MTM2016-77054-C2-1-P. P. Tradacete gratefully acknowledges support of Spanish Ministerio de Econom\'{\i}a, Industria y Competitividad through grants MTM2016-76808-P and MTM2016-75196-P, and Grupo UCM 910346.}

\maketitle

\begin{abstract}
We study the class of $(p,q)$-regular operators between quasi-Banach lattices. In particular, a representation of this class as the dual of a certain tensor norm for Banach lattices is given. We also provide some  factorization results for $(p,q)$-regular operators yielding new Marcinkiewicz-Zygmund type inequalities for Banach function spaces.  An extension theorem for $(q, \infty)$-regular operators defined on a subspace of $L_q$ is also given.
\end{abstract}


\section{Introduction}

This paper is devoted to study operators between Banach or quasi-Banach lattices satisfying estimates of the form
$$
\bigg\|\bigg(\sum_{i=1}^n|Tx_i|^p\bigg)^{\frac1p}\bigg\|\leq K \bigg\|\bigg(\sum_{i=1}^n|x_i|^q\bigg)^{\frac1q}\bigg\|,
$$
for every choice of vectors $\{x_i\}_{i=1}^n$. The operators for which this inequality holds are called $(p,q)$-regular, and as far as we know were introduced by A. Bukhvalov in \cite{B1} in connection with the interpolation of Banach lattices (see also \cite{B2}). The aim of this note is to make a systematic study of the class of $(p,q)$-regular operators.

It should be noted that the notion of regular operator has different usages in the literature: these can refer to operators that can be written as a difference of positive operators (cf. \cite{AB}), or for the terminology used in \cite{Pisier}, these refer to operators $T:X\rightarrow Y$ for which there is $K>0$ such that
$$
\bigg\|\bigvee_{i=1}^n|Tx_i|\bigg\|\leq K \bigg\|\bigvee_{i=1}^n|x_i|\bigg\|
$$
for every $\{x_i\}_{i=1}^n\subset X$. The latter correspond in the current terminology to $(\infty,\infty)$-regular operators.

There has been a considerable interest in the literature to determine conditions under which every operator between two Banach lattices is $(r,r)$-regular (or for brevity $r$-regular). In particular, an application of Grothendieck's inequality due to J. L. Krivine \cite{Krivine} (see also \cite[Theorem 1.f.14]{LT2}) yields that for any Banach lattices $X,Y$, every bounded linear operator $T:X\rightarrow Y$ is $2$-regular. This fact has been later extended by N. J. Kalton to quite a large family of quasi-Banach lattices \cite{kalton}, and is also related to the complexification constants of an operator (cf. \cite{GM}). On the other hand, it is also known that positive operators among Banach lattices are always $(p,q)$-regular for $q \le p$. The case of $(\infty,1)$-regular operators, which contain all $(p,q)$-regular operators, have been recently shown to have good interpolation properties with respect to the Calder\'on-Lozanovskii construction \cite{RT2}.

In the particular case of operators between $L_p$ spaces, the question whether every operator is $r$-regular can be traced back to classical works of R. Paley, J. Marcinkiewicz, A. Zygmund and S. Kwapien, and has been completely settled by A. Defant and M. Junge in \cite{DJ} (see also the references therein). In particular, in that paper, the authors characterize the triples $(p,q,r)$ for which every operator $T:L_p\rightarrow L_q$ is $r$-regular, even providing quantitative versions and asymptotic estimates of the constants involved in some cases. In this paper, we will analyze the analogous situation concerning $(p,q)$-regular operators.

It goes without saying that there is also a natural interplay between $(p,q)$-regularity and summability properties. This connection stems from the fact that for a Banach lattice $X$ and $\{x_i\}_{i=1}^n\subset X$ it holds that
$$
\sup_{x^* \in B_{X^*}} \Big( \sum |\langle x_i, x^* \rangle|^p \Big)^{1/p} \le \Big\| \Big( \sum | x_i|^p \Big)^{1/p} \Big\|_X.
$$
In particular, the above inequality yields that every lattice $(p,q)$-summing operator is $(p,q)$-regular (see \cite{Danet, NS} concerning lattice summing operators). The natural connection with convexity and concavity will also be explored.

The paper is organized as follows: after Section \ref{prelim}, where a preliminary discussion about the basics on $(p,q)$-regular operators is given, in Section \ref{tensor} we present several facts for this class of operators in the framework of Laprest\'e tensor norms (see \cite{deflo}). In particular, this allows to represent the class of $(p,q)$-regular operators as the dual of a certain tensor product in a standard way. It must be noted though that this point of view has been historically considered for Banach spaces, or locally convex vector spaces, but it is not so common for the case of Banach lattices. This approach is not to be confused with that of Banach lattice tensor norms, which studies the conditions under which a particular topology in a tensor product of Banach lattices becomes itself a Banach lattice (see the founding paper of D. Fremlin \cite{Fremlin} and related recent work of A. Schep in \cite{Schep}).

Next part of the paper, Section \ref{subsecFacBan}, is devoted to the peculiarities of $(p,q)$-regular operators between $L_r$ spaces. We first study the factorization properties of these operators in terms of the Maurey-Rosenthal theory (see Theorems \ref{thA} and \ref{t:factor2}) in order to provide a characterization of a specific class of operators factoring through $L_r$-spaces. As a main result of this section, we give a new class of Marcinkiewicz-Zygmund inequalities involving norms of general Banach function lattices.

The paper is finished by showing the extension properties of the $(\infty,q)$-regular operators defined on a subspace of a Banach lattice (Theorem \ref{t:extension}), which provide a version of a result of G. Pisier on extension of $\infty$-regular operators (see \cite{Pisier}).

We use standard terminology from Banach spaces, Banach lattices and operator theory. For any unexplained notion the reader is referred to the monographs \cite{AB, deflo, LT2}.

\section{Definitions and preliminaries}\label{prelim}

Suppose $X$ is a quasi-Banach lattice of measurable functions on a measure space $(\Omega,\Sigma,\mu)$. Given $\{x_i\}_{i=1}^n\subset X$ and $p\in(0,\infty)$, expressions of the form $(\sum_{i=1}^n|x_i|^p)^{1/p}$ can be defined pointwise ($\mu$-almost everywhere). Although Krivine's functional calculus (\cite[Theorem 1.d.1]{LT2}, see also \cite{Popa} for the non-locally convex setting) gives a meaning to this kind of expressions for abstract quasi-Banach lattices, for most applications we will only be concerned with the case of measurable functions.

\begin{definition}\label{d:pqregular}
Given quasi-Banach lattices $X,Y$, and $0< p,q< \infty$ a linear map $T:X\rightarrow Y$ is \emph{$(p,q)$-regular} if there is a constant $K>0$ such that for every $\{x_i\}_{i=1}^n\subset X,$
$$
\bigg\|\bigg(\sum_{i=1}^n|Tx_i|^p\bigg)^{\frac1p}\bigg\|\leq K \bigg\|\bigg(\sum_{i=1}^n|x_i|^q\bigg)^{\frac1q}\bigg\|.
$$
Similarly, $T$ is $(p,\infty)$-regular (respectively, $(\infty,q)$-regular) when
$$
\bigg\|\bigg(\sum_{i=1}^n|Tx_i|^p\bigg)^{\frac1p}\bigg\|\leq K \bigg\|\bigvee_{i=1}^n|x_i|\bigg\|. \hspace{1cm}\Bigg( \textrm{resp. } \bigg\|\bigvee_{i=1}^n|Tx_i|\bigg\|\leq K  \bigg\|\bigg(\sum_{i=1}^n|x_i|^q\bigg)^{\frac1q}\bigg\|.\Bigg)
$$
\end{definition}

For simplicity, when $p=q$ we will say that $T$ is $p$-regular. With this notation, the well-known notion of regular operator (cf. \cite{Pisier}) corresponds to the case of $\infty$-regular operator.

We will write $R_{p,q}(X,Y)$ for the space of $(p,q)$-regular operators between $X$ and $Y$. We will denote by $\rho_{p,q}(T)$ the smallest $K>0$ for which the inequalities appearing in Definition \ref{d:pqregular}  hold for arbitrary elements in $X$. The following facts are straightforward:

\begin{proposition}\label{p:pqregbasic}\
\begin{enumerate}
\item Every $(p,q)$-regular linear map $T$ is bounded with $\|T\|\leq\rho_{p,q}(T)$.
\item Let $p_1\geq p$ and $q_1\leq q$. If $T$ is $(p,q)$-regular, then $T$ is $(p_1,q_1)$-regular with $\rho_{p_1,q_1}(T)\leq\rho_{p,q}(T)$.
\end{enumerate}
\end{proposition}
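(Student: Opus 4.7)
The plan is to prove (1) by simply specializing the defining inequality to a single vector, and to prove (2) by combining the pointwise (or Krivine calculus) comparison between different $\ell_r$-means with the monotonicity of the lattice quasi-norm.

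For part (1), I take $n=1$ and any $x\in X$ in the definition of $(p,q)$-regularity. The left-hand side reduces to $\bigl\||Tx|^p\bigr\|^{1/p}=\|Tx\|$ and the right-hand side to $\rho_{p,q}(T)\|x\|$, which gives $\|T\|\le \rho_{p,q}(T)$ at once (and in particular shows $T$ is bounded). The three remaining cases where $p$ or $q$ is $\infty$ work in exactly the same way.

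For part (2), the core observation is the standard embedding $\ell_r\hookrightarrow \ell_s$ with norm $1$ whenever $r\le s$, applied \emph{pointwise} (or via Krivine's functional calculus in the abstract setting). Concretely, since $p_1\ge p$, for any finite family $\{x_i\}_{i=1}^n\subset X$ one has
$$
\bigg(\sum_{i=1}^n|Tx_i|^{p_1}\bigg)^{1/p_1}\le \bigg(\sum_{i=1}^n|Tx_i|^{p}\bigg)^{1/p}
$$
in $Y$, and since $q_1\le q$,
$$
\bigg(\sum_{i=1}^n|x_i|^{q}\bigg)^{1/q}\le \bigg(\sum_{i=1}^n|x_i|^{q_1}\bigg)^{1/q_1}
$$
in $X$. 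Taking lattice quasi-norms (which are monotone) and chaining these with the $(p,q)$-regularity estimate yields
$$
\bigg\|\bigg(\sum_{i=1}^n|Tx_i|^{p_1}\bigg)^{1/p_1}\bigg\|
\le \rho_{p,q}(T)\,\bigg\|\bigg(\sum_{i=1}^n|x_i|^{q_1}\bigg)^{1/q_1}\bigg\|,
$$
which gives the desired inequality and the bound on $\rho_{p_1,q_1}(T)$.

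The cases where $p_1=\infty$ or $q=\infty$ are handled identically, replacing the relevant $\ell_r$-mean by the pointwise supremum and using $\bigvee_i|Tx_i|\le(\sum_i|Tx_i|^p)^{1/p}$ (respectively $(\sum_i|x_i|^q)^{1/q}\le\bigvee_i|x_i|\cdot n^{1/q}$ is \emph{not} what we want; instead the correct chain uses $(\sum|x_i|^q)^{1/q}\le \bigvee_i|x_i|$ only when $q=\infty$, so we just match the cases as stated in Definition \ref{d:pqregular}). There is no real obstacle here: the only thing to be slightly careful about is that in the abstract quasi-Banach lattice setting the pointwise inequalities between $\ell_r$-means are interpreted through Krivine's functional calculus, but this is already pointed out in the paragraph preceding Definition \ref{d:pqregular}.
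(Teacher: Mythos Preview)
Your argument is correct and is precisely the intended ``straightforward'' verification: the paper does not supply a proof of this proposition at all, simply labeling the facts as immediate. Your specialization to $n=1$ for part (1) and the pointwise $\ell_r$-mean comparison combined with monotonicity of the quasi-norm for part (2) are exactly what is needed. The only cosmetic issue is the parenthetical digression in the $\infty$ cases, where you write down an inequality you then reject; it would read more cleanly to state directly that $\bigvee_i|Tx_i|\le(\sum_i|Tx_i|^p)^{1/p}$ handles $p_1=\infty$, and $\bigvee_i|x_i|\le(\sum_i|x_i|^{q_1})^{1/q_1}$ handles $q=\infty$, without the self-correction.
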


Given $\{x_i\}_{i=1}^n\subset X$, for $p\geq 1$, taking $1/p+1/p'=1$ we can write
$$
\bigg(\sum_{i=1}^n|x_i|^p\bigg)^{\frac1p}=\sup\bigg\{\sum_{i=1}^n a_i x_i:\sum_{i=1}^n |a_i|^{p'}\leq1\bigg\},
$$
(the supremum being taken in the sense of order in the lattice $X$). In particular when $T$ is a positive operator, we have the inequalities
$$
\bigg(\sum_{i=1}^n|Tx_i|^p\bigg)^{\frac1p}=\sup_{\underset{i=1}{\overset{n}\sum} |a_i|^{p'}\leq1}T\Big(\sum_{i=1}^n a_i x_i\Big)\leq T\bigg(\sup_{\underset{i=1}{\overset{n}\sum} |a_i|^{p'}\leq1}\sum_{i=1}^n a_i x_i\bigg) =T\bigg(\sum_{i=1}^n|x_i|^p\bigg)^{\frac1p}
$$
More generally, if the modulus of an operator $T:X\rightarrow Y$ exists (cf. \cite[Chapter 1]{AB}), then it is $p$-regular for every $1\leq p\leq \infty$, and $\rho_{p,p}(T)\leq\||T|\|$. In particular, this happens when $Y$ is a Dedekind complete Banach lattice and $T$ can be written as a difference of two positive operators. Conversely, suppose $Y$ is complemented by a positive projection in its bidual $Y''$, then every $1$-regular operator $T:X\rightarrow Y$ can be written as a difference of two positive operators \cite[p. 307]{Kusraev}.

The definition of a $(p,q)$-regular operator suggests a connection with convexity and concavity. Indeed, recall that an operator $T:X\rightarrow Y$ is $(p,q)$-concave (cf. \cite[p. 330]{DJT}) whenever  there is a constant $C>0$ such that for every $\{x_i\}_{i=1}^n\subset X$
$$
\bigg(\sum_{i=1}^n\|Tx_i\|^p\bigg)^{\frac1p}\leq C \bigg\|\bigg(\sum_{i=1}^n|x_i|^q\bigg)^{\frac1q}\bigg\|.
$$
It is straightforward to check that, for $p\geq q$, if $T:X\rightarrow Y$ is $(p,q)$-concave and $S:Y\rightarrow Z$ is $p$-convex, then $ST$ is $(p,q)$-regular with $\rho_{p,q}(ST)\leq M^{(p)}(S) K_{p,q}(T)$ (where, $M^{(p)}(S)$ and $K_{p,q}(T)$ denote, respectively, the $p$-convexity constant of $S$ and the $(p,q)$-concavity constant of $T$, cf. \cite[1.d.3]{LT2}).

We mentioned above that for a large class of quasi-Banach lattices every bounded linear operator is $2$-regular. In fact, an application of Grothendieck's inequality due to J. L. Krivine \cite{Krivine} (see also \cite[Theorem 1.f.14]{LT2}) yields that for Banach lattices $X,Y$, every bounded linear operator $T:X\rightarrow Y$ is $2$-regular with $\rho_{2,2}(T)\leq K_G\|T\|$, where $K_G$ denotes Grothendieck's constant.

This fact was extended by N. J. Kalton to $L$-convex quasi-Banach lattices \cite{kalton}. Recall that a quasi-Banach lattice $X$ is $L$-convex whenever its order intervals are uniformly locally convex, that is, whenever there exists $0<\varepsilon<1$ so that if $u\in X_+$ with $\|u\|=1$ and $0\leq x_i\leq u$ (for $i=1,\ldots,n$) satisfy
$$
\frac1n(x_1+\ldots+x_n)\geq(1-\varepsilon)u,
$$
then
$$
\max_{1\leq i\leq n}\|x_i\|\geq\varepsilon.
$$

This class includes every quasi-Banach lattice which is the $p$-concavification of a Banach lattice (for instance, $L_p$, $\Lambda(W,p)$ and  $L_{p,\infty}$ for $0<p<\infty$). Kalton's result states that if $Y$ is an $L$-convex quasi-Banach lattice, then for every quasi-Banach lattice $X$, every operator $T:X\rightarrow Y$ is $2$-regular \cite[Theorem 3.3]{kalton}.

\begin{proposition}
Given quasi-Banach lattices $X,Y$ and $0<p,q\leq\infty$, the space $(R_{p,q}(X,Y),\rho_{p,q}(\cdot))$ is a quasi-Banach space.
\end{proposition}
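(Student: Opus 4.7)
I would verify, in order, that $\rho_{p,q}$ is a quasi-norm on $R_{p,q}(X,Y)$ and then that the associated space is complete. The first part is mostly routine: absolute homogeneity is immediate from the definition, and definiteness follows from $\|T\| \leq \rho_{p,q}(T)$ (Proposition~\ref{p:pqregbasic}). For the quasi-triangle inequality the key ingredient is the lattice inequality
$$
\Big(\sum_i |y_i + z_i|^p\Big)^{1/p} \leq C_p \bigg[\Big(\sum_i |y_i|^p\Big)^{1/p} + \Big(\sum_i |z_i|^p\Big)^{1/p}\bigg],
$$
with $C_p = 1$ for $1 \leq p \leq \infty$ and $C_p = 2^{1/p-1}$ for $0 < p < 1$ (and with $\bigvee_i|\cdot|$ replacing $(\sum_i|\cdot|^p)^{1/p}$ when $p = \infty$), which holds pointwise for function lattices and, in general, inside an abstract quasi-Banach lattice through the Krivine functional calculus cited just above Definition~\ref{d:pqregular}. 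Combining it with the quasi-triangle constant $K_Y$ of the lattice quasi-norm of $Y$ yields $\rho_{p,q}(T+S) \leq K_Y C_p\bigl(\rho_{p,q}(T) + \rho_{p,q}(S)\bigr)$.

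For completeness, I would take a Cauchy sequence $(T_n)$ in $R_{p,q}(X,Y)$. Since $\|T_n - T_m\| \leq \rho_{p,q}(T_n - T_m)$, it is also Cauchy in the quasi-Banach space $B(X,Y)$, so there is $T \in B(X,Y)$ with $T_n \to T$ in the operator quasi-norm. Given $\varepsilon > 0$, pick $N$ so that $\rho_{p,q}(T_n - T_m) < \varepsilon$ for $n,m \geq N$; then for any finite family $\{x_i\}_{i=1}^k \subset X$ and every such $n,m$,
$$
\bigg\|\bigg(\sum_{i=1}^k |(T_n - T_m) x_i|^p\bigg)^{1/p}\bigg\|_Y \leq \varepsilon \bigg\|\bigg(\sum_{i=1}^k |x_i|^q\bigg)^{1/q}\bigg\|_X.
$$
The plan is then to let $m \to \infty$: since $(T_n - T_m)x_i \to (T_n - T)x_i$ in $Y$ for each $i$, and the map $(y_1,\dots,y_k) \mapsto (\sum_i |y_i|^p)^{1/p}$ is continuous on $Y^k$, the left-hand side converges to the corresponding expression with $T_n - T$. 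Hence $\rho_{p,q}(T_n - T) \leq \varepsilon$, which shows simultaneously that $T \in R_{p,q}(X,Y)$ and that $T_n \to T$ in $R_{p,q}(X,Y)$.

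The one point that requires a small extra argument is the continuity of the lattice functional calculus just invoked. I would deduce it from the estimate $\big|(\sum_i |y_i|^p)^{1/p} - (\sum_i |y_i'|^p)^{1/p}\big| \leq C_p (\sum_i |y_i - y_i'|^p)^{1/p}$ (again provided by Krivine's calculus, with the same $C_p$ as above), the crude pointwise bound $(\sum_i a_i^p)^{1/p} \leq k^{1/p} \bigvee_i a_i$, and monotonicity of the lattice quasi-norm of $Y$: these together give $\|(\sum_i |y_i|^p)^{1/p} - (\sum_i |y_i'|^p)^{1/p}\|_Y \to 0$ whenever $y_i' \to y_i$ in $Y$ for every $i$. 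This is the only ingredient of the argument that is not purely formal; everything else is bookkeeping with the quasi-triangle constants.
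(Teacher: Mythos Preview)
Your proposal is correct and follows exactly the standard route the paper has in mind; the paper's own proof is just the one-line remark ``This is straightforward. For completeness, just note that $\|T\|\leq\rho_{p,q}(T)$,'' and you have simply spelled out the details (quasi-triangle inequality via the lattice $p$-sum inequality, and completeness via the operator-norm limit plus continuity of the functional calculus) that the paper leaves implicit.
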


\begin{proof}
This is straightforward. For completeness, just note that $\|T\|\leq\rho_{p,q}(T)$.
\end{proof}

\begin{proposition} Let $X,Y$ be quasi-Banach lattices.
\begin{itemize}
\item[(i)]
Let $0< p < q \le \infty$. Then $R_{p,q}(X,Y)=\{0\}$.
\item[(ii)] Suppose $Y$ is $L$-convex, then for every $p\geq 2\geq q$ we have $R_{p,q}(X,Y) = L(X,Y)$.
\item[(iii)] For $0< q \le p$, if $X$ is $q$-concave and $Y$ is $p$-convex, then $R_{p,q}(X,Y)= L(X,Y).$
\end{itemize}
\end{proposition}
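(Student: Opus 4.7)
For (i), the plan is a simple scaling/homogeneity argument. Fix any $x \in X$ with $Tx \neq 0$ and apply the defining inequality to the constant family $x_1 = \cdots = x_n = x$. The left-hand side becomes $n^{1/p}\|Tx\|$ and the right-hand side $K n^{1/q}\|x\|$ (with the obvious convention when $q=\infty$, in which case it is simply $K\|x\|$). Since $1/p > 1/q$, letting $n \to \infty$ forces $Tx = 0$, which contradicts our choice of $x$. Hence $T=0$.

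For (ii), I would just quote the tools already assembled in the preliminaries. Kalton's theorem, recalled right before the statement, tells us that if $Y$ is $L$-convex then every bounded $T : X \to Y$ is $2$-regular, i.e. $(2,2)$-regular. Since $p \geq 2 \geq q$, the monotonicity assertion in Proposition \ref{p:pqregbasic}(2) (with $p_1 = p$, $q_1 = q$, and $(p,q) = (2,2)$) upgrades this to $(p,q)$-regularity, with $\rho_{p,q}(T) \leq \rho_{2,2}(T) \leq K_G\|T\|$.

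For (iii), the plan is to chain three inequalities: $p$-convexity of $Y$, the monotonicity of $\ell^r$-norms in $r$ (valid precisely because $q \leq p$), and $q$-concavity of $X$. Concretely, for $\{x_i\}_{i=1}^n \subset X$ I would write
\[
\bigg\|\bigg(\sum_{i=1}^n|Tx_i|^p\bigg)^{1/p}\bigg\|
\;\leq\; M^{(p)}(Y)\bigg(\sum_{i=1}^n\|Tx_i\|^p\bigg)^{1/p}
\;\leq\; M^{(p)}(Y)\bigg(\sum_{i=1}^n\|Tx_i\|^q\bigg)^{1/q},
\]
using $p$-convexity of $Y$ and then $\ell^p \hookrightarrow \ell^q$ (as $q\leq p$). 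Bounding $\|Tx_i\| \leq \|T\|\,\|x_i\|$ and invoking $q$-concavity of $X$ gives
\[
\bigg(\sum_{i=1}^n\|Tx_i\|^q\bigg)^{1/q}
\;\leq\; \|T\|\bigg(\sum_{i=1}^n\|x_i\|^q\bigg)^{1/q}
\;\leq\; \|T\|\,M_{(q)}(X)\,\bigg\|\bigg(\sum_{i=1}^n|x_i|^q\bigg)^{1/q}\bigg\|,
\]
which yields $\rho_{p,q}(T) \leq M^{(p)}(Y)\,M_{(q)}(X)\,\|T\|$. The only mild subtlety is that for quasi-Banach lattices one must be sure the convexity/concavity constants are well-defined for the relevant exponents, but this is standard in the setting (and for $q<1$ one interprets the $q$-concavity inequality through Krivine's calculus as indicated in the preliminaries). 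There is no serious obstacle here; the whole statement reduces to combining already-established facts.
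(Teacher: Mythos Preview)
Your proposal is correct and matches the paper's proof essentially line for line: the scaling argument for (i), the appeal to Kalton's theorem plus Proposition~\ref{p:pqregbasic}(2) for (ii), and the chain $p$-convexity of $Y$ $\to$ $\ell^p\hookrightarrow\ell^q$ $\to$ $\|T\|$ $\to$ $q$-concavity of $X$ for (iii), yielding the same constant $M^{(p)}(Y)\,M_{(q)}(X)\,\|T\|$.
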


\begin{proof}
(i) Let $T \in R_{p,q}(X,Y)$. Suppose there is $x \in X$ such that $Tx \ne 0$. Then for each $n \in \mathbb N$,
$$
n^\frac1p \| Tx\| = \|(\sum_{i=1}^n |T x|^p)^\frac1p \| \le \rho_{p,q}(T)  \| (\sum_{i=1}^{n} |x|^q )^\frac1q \| = \rho_{p,q}(T) \, n^\frac1q \|x\|.
$$
Since this is impossible for large $n$, we have that $T=0$.

(ii) This follows from \cite[Theorem 3.3]{kalton} and Proposition \ref{p:pqregbasic}.

(iii) Let $T:X\rightarrow Y$ be an operator. For $(x_i)_{i=1}^n\subset X$ we have
\begin{align*}
\|(\sum_{i=1}^n |T x_i|^p)^{\frac1p} \| &\le M^{(p)}(Y)(\sum_{i=1}^n \|T x_i\|^p)^{\frac1p} \le  M^{(p)}(Y)(\sum_{i=1}^n \|T x_i\|^q)^{\frac1q}\\
&\le  M^{(p)}(Y)\|T\| \, (\sum_{i=1}^n \|x_i\|^q)^{\frac1q}\le  M^{(p)}(Y)\|T\| M_{(q)}(X)\, \|(\sum_{i=1}^n |x_i|^q)^\frac1q  \|.
\end{align*}

\end{proof}

In the rest of the section we characterize $(p,q)$-regularity of operators in terms of the bilinear maps defined by them. Some of the results presented here are well-known; but we include them here in a unified way and with complete proofs for the aim of completeness. We first analyze a special type of duality in K\"othe-Bochner spaces, which will be done in what follows.

Recall that for a Banach lattice $X$ and $1\leq p< \infty$, $X(\ell_p)$ is the closed subspace of sequences $x=(x_n)_{n\in\mathbb N}\subset X$ for which
$$
\|x\|_{X(\ell_p)}=\sup_k\Big\|\Big(\sum_{n=1}^k|x_n|^p\Big)^{\frac1p}\Big\|<\infty,
$$
and which is spanned by the eventually null sequences. Similarly, $X(\ell_\infty)$ corresponds to the space of those sequences with
$$
\|x\|_{X(\ell_\infty)}=\sup_k\Big\|\bigvee_{n=1}^k|x_n|\Big\|<\infty.
$$
These are the natural generalization of Bochner (or K\"othe-Bochner) spaces for abstract Banach lattices. Indeed, let $X$ be an order continuous quasi-Banach function space over the measure space $(\Omega,\Sigma,\mu)$ and let $0<r \le \infty$. The K\"othe-Bochner space $X(\Omega,\Sigma,\mu; \ell^r)$ is defined to be the space of strongly $\Sigma$-measurable functions $\phi:\Omega \to \ell^r$ with the quasi-norm given by
$$
\|\phi\|_{X(\Omega,\Sigma,\mu;\ell^r)}:= \big\| \phi_{\|\cdot\|_r} \big\|_{X}, 
$$
where $\phi_{\|\cdot\|_r}:\Omega\to\mathbb R$ is given by $\omega\in \Omega \mapsto \|\phi(\omega)\|_{\ell^r}$.

\begin{lemma} \label{dense}
Let $X$ be an order continuous quasi-Banach function space over $(\Omega,\Sigma,\mu)$, $1\leq r\leq \infty$ and let $(e_i)_{i\in\mathbb N}$ denote the unit basis of $\ell_r$. For any $(x_i)_{i=1}^n \subset  X$ we have
\begin{itemize}

\item[(i)] the function $\phi:\Omega\to\ell^r$ given by $\phi(\omega)=\sum_{i=1}^n x_i(\omega)e_i$ belongs to $X(\Omega,\Sigma,\mu;\ell^r)$ with
$$
\|\phi\|_{X(\Omega,\Sigma,\mu; \ell^r)}:= \big\| \big( \sum_{i=1}^n |x_i|^r \big)^{\frac 1r} \big\|_{X},
$$

\item[(ii)] the set $X(\Omega,\Sigma,\mu; \ell^r)_0$ of all the functions defined in this way is dense in $X(\Omega,\Sigma,\mu;\ell^r)$.

\end{itemize}

\end{lemma}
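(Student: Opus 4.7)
My plan is to tackle (i) by direct verification and (ii) by a two-stage approximation using order continuity of $X$.

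For (i), the function $\phi(\omega) = \sum_{i=1}^n x_i(\omega) e_i$ takes values in the finite-dimensional subspace $\spn\{e_1,\ldots,e_n\}\subset \ell^r$. Strong $\Sigma$-measurability of $\phi$ follows at once from the measurability of the scalar functions $x_i$ via coordinatewise approximation by simple scalar functions (the range being finite-dimensional, essential separable-valuedness is automatic). Pointwise, $\|\phi(\omega)\|_{\ell^r} = (\sum_{i=1}^n |x_i(\omega)|^r)^{1/r}$ (with the obvious $\sup$ version for $r=\infty$), so $\phi_{\|\cdot\|_r}$ coincides with $(\sum_i |x_i|^r)^{1/r}$, which lies in $X$. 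Taking $X$-norms yields both membership in $X(\Omega,\Sigma,\mu;\ell^r)$ and the claimed norm identity.

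For (ii), I would first approximate a general $\phi\in X(\Omega,\Sigma,\mu;\ell^r)$ by $\ell^r$-valued simple functions $\psi_m = \sum_{k=1}^{k_m} v_k^m \chi_{A_k^m}$ with $v_k^m\in \ell^r$ and $A_k^m\in\Sigma$ disjoint. Strong measurability of $\phi$ delivers such $\psi_m$ converging a.e.\ to $\phi$, and by a standard truncation we can arrange $\|\psi_m(\omega)\|_{\ell^r}\le 2\|\phi(\omega)\|_{\ell^r}$ pointwise; since the dominating function $\phi_{\|\cdot\|_r}$ belongs to the order continuous quasi-Banach function space $X$, the pointwise vanishing of $(\psi_m-\phi)_{\|\cdot\|_r}$ is promoted to convergence in the $X(\Omega,\Sigma,\mu;\ell^r)$ norm.

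Next, for each simple $\psi=\sum_k v_k\chi_{A_k}$ I would truncate each $v_k$ to its first $N$ coordinates, obtaining $v_k^{(N)}$, and set $\psi^{(N)}=\sum_k v_k^{(N)}\chi_{A_k}$. Writing $x_i=\sum_k \langle v_k,e_i\rangle \chi_{A_k}$, the truncation has the form in (i) and hence lies in $X(\Omega,\Sigma,\mu;\ell^r)_0$. For $r<\infty$ we have $\|v_k-v_k^{(N)}\|_{\ell^r}\to 0$ for every $k$, so $(\psi-\psi^{(N)})_{\|\cdot\|_r}\to 0$ pointwise, dominated by $\psi_{\|\cdot\|_r}\in X$; order continuity again forces $\psi^{(N)}\to\psi$ in $X(\Omega,\Sigma,\mu;\ell^r)$. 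Combining the two stages proves density. The main obstacle is the truncation step when $r=\infty$, since coordinatewise cut-offs need not converge for general $\ell^\infty$-vectors; this is consistent with the definition of $X(\ell^\infty)$ given at the start of the section as the closure of eventually null sequences, so the argument applies uniformly in $1\le r\le\infty$ once one restricts to that closure (equivalently, to essentially $c_0$-valued $\phi$ in the $r=\infty$ case).
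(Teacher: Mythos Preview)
Your proof is correct and follows essentially the same two-stage strategy as the paper: first approximate a general $\phi$ by $\ell^r$-valued simple functions, then truncate the coordinates of each simple function. The only technical difference is in the first stage: the paper builds a sequence of simple functions whose pointwise errors are monotonically decreasing (by successively choosing the better of two approximants on the appropriate set) and then applies order continuity to that decreasing sequence, whereas you use the more common truncation-to-domination device $\|\psi_m(\omega)\|_{\ell^r}\le 2\|\phi(\omega)\|_{\ell^r}$ and invoke order continuity via a dominated-convergence argument. Both are standard ways to pass from a.e.\ convergence to norm convergence in an order continuous lattice, and your observation about the $r=\infty$ case (coordinate truncation only works for $c_0$-valued vectors, consistent with the closure-of-eventually-null definition) is a point the paper leaves implicit.
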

\begin{proof}
(i) Since $X$ is order continuous, each function $x_i$ can be approximated by simple functions. Therefore, as the sequence is finite, a direct calculation shows that there is a sequence of $\ell^r$-valued simple functions converging in the quasi-norm of $X(\Omega,\Sigma,\mu; \ell^r)$ to $\phi$, and also that there is a subsequence of it that converges $\mu$-almost everywhere. Therefore, $\phi$ is strongly measurable. The formula for the quasi-norm is just the definition of the quasi-norm in the K\"othe-Bochner space.

(ii) By the order continuity of $X$, it can be  easily seen using also that the functions are strongly measurable that vector valued simple functions are dense in $X(\Omega,\Sigma,\mu;\ell^r)$.

Indeed, take  a sequence of simple functions $(x_i)$ converging $\mu$-a.e. to a function $x \in X(\Omega,\Sigma,\mu; \ell^r)$. We can choose a sequence $(y_i)$ such that for every $i$, $\|x(w)-y_{i+1} (w)\|_X \le \|x(w)- y_i(w)\|_X$ holds $\mu$-a.e. In order to see this, just take $x_1=y_1$ and consider the measurable set $A_2:=\{w| \|x(w)-x_1(w)\| \le \|x(w) -x_2(w)\| \}$,
and define the simple function $y_2= y_1 \chi_{A_2}  + x_2 \chi_{A_2^c}.$ Now, define $y_3$ in the same way using $y_2$ and $x_3$, and so on. The resulting sequence satisfies the requirement.
Now, we have that the real valued functions $\tau_i(w)=\|x(w)- y_i(w)\|$ are non-negative, decreasing and converge to $0$ $\mu$-a.e. The order continuity of $X(\mu)$ gives then that $\lim_i \tau_i=0$ in the norm of $X(\mu)$, that is, $\lim_i y_i =x$ in $X(\Omega,\Sigma,\mu;\ell^r)$.

 So it is enough to prove that there is a sequence of functions as in (i) converging to every simple function like $\psi=\sum_{i=1}^m u_i \chi_{A_i}$ in $X(\Omega,\Sigma,\mu; \ell^r)$, with $u_i=\sum_j \lambda_{i,j}e_j\in\ell_r$. In order to see that, note that for every $k$ we can write $\psi$ as
$$
\psi= \sum_{i=1}^m (\sum_{j=1}^k \lambda_{i,j} e_j) \cdot \chi_{A_i}+
\sum_{i=1}^m ( \sum_{j>k} \lambda_{i,j} e_j) \cdot \chi_{A_i}.
$$
Clearly, the first member in this sum, say $\psi_k= \sum_{i=1}^m (\sum_{j=1}^k \lambda_{i,j} e_j) \cdot \chi_{A_i}$, belongs to the above family. Thus, it is enough to check that the second member in the sum converges to $0$ in the quasi-norm, and so $\psi_k \to_k \psi$. Indeed,
$$
\big\| \sum_{i=1}^m (\sum_{j>k} \lambda_{i,j} e_j) \cdot  \chi_{A_i} \big\|_{X(\Omega,\Sigma,\mu;\ell^r)} \le
\sum_{i=1}^m ( \sum_{j= k+1}^\infty |\lambda_{i,j}|^r )^\frac 1r \| \chi_{A_i}\|_{X} \to_k 0.
$$
\end{proof}

Let $1 \le r, p,s \le \infty$ such that $1/r=1/p+1/s$, and $X$ a Banach lattice with dual $X'$. A similar argument as that of \cite[Proposition 1.d.2]{LT2} yields that for any $\{x_i\}_{i=1}^n\subset X$, $\{x'_i\}_{i=1}^n\subset X'$ it holds that
\begin{equation}\label{eq:genholder}
\Big(\sum_{i=1}^n|\langle x'_i,x_i\rangle|^r\Big)^{\frac1r}\leq\Big\langle \Big(\sum_{i=1}^n|x'_i|^s\Big)^{\frac1s}, \Big(\sum_{i=1}^n|x_i|^p\Big)^{\frac1p}\Big\rangle.
\end{equation}

For $r=1$ this inequality is sharp, in the sense of \cite[7.2.2 (2)]{Kusraev}. For $r>1$ this need not be the case, but in the K\"othe-Bochner setting there is an improvement which will be the key for our purposes. This can be understood as the consequence of some ``duality in the range" between norms of vector valued function spaces.

We will consider the ($\mu$-almost everywhere) pointwise product of vector valued functions as follows: for $\phi \in X(\Omega,\Sigma,\mu;\ell^p)$ and $\psi \in X'(\Omega,\Sigma,\mu; \ell^s)$,
\begin{equation}\label{eq:pointwiseproduct}
(\phi \cdot \psi) \, (\omega):= \sum_{i=1}^\infty \langle \phi(\omega),e_i \rangle \langle \psi(\omega),e_i \rangle \, e_i.
\end{equation}

H\"older's inequality in the range of the function yields that that $ (\phi \cdot \psi) \, (w) \in \ell^r$ $\mu$-almost everywhere. In fact we have the following:

\begin{lemma} \label{holder} (H\"older's inequality for $\ell^r$-valued K\"othe-Bochner functions)
Let $X$ be an order continuous Banach function space over $(\Omega, \Sigma, \mu)$, and $1 \le r\leq p,s \le \infty$ such that $1/r=1/p+1/s$. If $\phi \in X(\Omega, \Sigma, \mu;\ell^p)$ and $\psi \in X'(\Omega, \Sigma, \mu; \ell^s)$, then
$$
\big\| \phi \cdot \psi \big\|_{L^1(\Omega, \Sigma, \mu; \ell^r)} \le \big\| \phi \big\|_{X(\Omega, \Sigma, \mu;\ell^p)} \cdot
\big\| \psi \big\|_{X'(\Omega, \Sigma, \mu;\ell^s)}.
$$
\end{lemma}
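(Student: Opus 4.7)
The plan is to reduce the statement to two successive applications of the scalar Hölder inequality, the first acting fibrewise on $\ell^r$ and the second acting on the underlying measure space with respect to the Köthe duality between $X$ and $X'$.

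First I would verify the pointwise inequality: for $\mu$-almost every $\omega \in \Omega$, the classical Hölder inequality in the sequence spaces gives
$$
\|(\phi \cdot \psi)(\omega)\|_{\ell^r} = \Big(\sum_{i=1}^\infty |\langle \phi(\omega),e_i\rangle \langle \psi(\omega),e_i\rangle|^r\Big)^{1/r} \le \|\phi(\omega)\|_{\ell^p} \cdot \|\psi(\omega)\|_{\ell^s},
$$
using $1/r = 1/p + 1/s$ and the definition \eqref{eq:pointwiseproduct} of the pointwise product. The boundary cases ($p = \infty$ or $s = \infty$) are handled by the usual convention. This step also implicitly requires that the scalar function $(\phi \cdot \psi)_{\|\cdot\|_r}$ is measurable; this follows from the strong measurability of $\phi$ and $\psi$, since continuity of the $\ell^r$-norm preserves measurability.

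Second, I would integrate the resulting inequality against $\mu$ and invoke Hölder's inequality for the Banach function space $X$ and its Köthe dual $X'$ applied to the nonnegative measurable functions $\phi_{\|\cdot\|_p}$ and $\psi_{\|\cdot\|_s}$:
$$
\|\phi \cdot \psi\|_{L^1(\Omega,\Sigma,\mu;\ell^r)} = \int_\Omega \|(\phi \cdot \psi)(\omega)\|_{\ell^r}\, d\mu(\omega) \le \int_\Omega \phi_{\|\cdot\|_p}(\omega) \cdot \psi_{\|\cdot\|_s}(\omega)\, d\mu(\omega) \le \|\phi_{\|\cdot\|_p}\|_X \cdot \|\psi_{\|\cdot\|_s}\|_{X'}.
$$
By the definition of the Köthe--Bochner quasi-norm this last expression equals $\|\phi\|_{X(\Omega,\Sigma,\mu;\ell^p)} \cdot \|\psi\|_{X'(\Omega,\Sigma,\mu;\ell^s)}$, which is the desired bound.

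There is no real obstacle to this argument: the only point requiring a sentence of justification is that $\phi_{\|\cdot\|_p}$ lies in $X$ and $\psi_{\|\cdot\|_s}$ in $X'$, which is exactly the content of the hypothesis, and that these scalar functions are measurable, which is immediate from the strong measurability of $\phi$ and $\psi$. The order continuity of $X$ hypothesized in the statement is not actually needed for this inequality; it was assumed in Lemma \ref{dense} so as to ensure a good description of $X(\Omega,\Sigma,\mu;\ell^r)$ by density, and can be carried along harmlessly here.
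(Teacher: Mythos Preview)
Your proof is correct and follows essentially the same two-step route as the paper: a pointwise H\"older inequality in the sequence spaces $\ell^p,\ell^s,\ell^r$, followed by the K\"othe-dual H\"older inequality $\int fg\,d\mu \le \|f\|_X\|g\|_{X'}$ applied to the scalar functions $\phi_{\|\cdot\|_p}$ and $\psi_{\|\cdot\|_s}$. Your remark that order continuity is not actually used in this inequality is also accurate; the paper invokes it only to justify the integral representation of the duality, but the K\"othe-dual H\"older inequality holds regardless.
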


\begin{proof}
The order continuity of $X$ gives the representation of the duality by means of the integral with respect to $\mu$, which can be taken as a probability measure (cf. \cite[Theorem 1.b.14]{LT2}.
Let $\phi$ and $\psi$ as in the statement. Then
\begin{align*}
\| \phi \cdot \psi \|_{L^1(\Omega, \Sigma, \mu; \ell^r)} & =
\int \big( \sum_{i=1}^\infty |\langle \phi(w),e_i \rangle \langle \psi(w), e_i \rangle|^r \big)^\frac 1r d \mu\\
&\le
\int  \, \big( \sum_{i=1}^\infty |\langle \phi(w), e_i \rangle|^p \big)^\frac 1p \cdot
 \big( \sum_{i=1}^\infty | \langle \psi(w), e_i \rangle|^s \big)^\frac 1s \,
 d \mu\\
&= \int \big\| \phi(w) \big\|_{\ell^p} \cdot
\big\| \psi(w) \big\|_{\ell^s} d \mu\\
&\le \big\| \phi \big\|_{X(\Omega, \Sigma, \mu;\ell^p)} \cdot
\big\| \psi \big\|_{X'(\Omega, \Sigma, \mu;\ell^s)}.
\end{align*}
\end{proof}

\begin{lemma} \label{dual}
Let $X$ be an order continuous Banach function space over $(\Omega, \Sigma, \mu)$ and consider $1 \le r \le p,s \le \infty$ satisfying that $1/r=1/p+1/s$. Given $\{x_i\}_{i=1}^n\subset X$ let $\phi= \sum_{i=1}^n x_i e_i \in X(\Omega, \Sigma, \mu;\ell^p)$. Then
$$
\| \phi \|_{X(\Omega, \Sigma, \mu;\ell^p)} = \sup_{\psi \in B_{X'(\Omega, \Sigma, \mu;\ell^s)}} \big\| \phi \cdot \psi \big\|_{L^1(\Omega, \Sigma, \mu; \ell^r)}.
$$
Actually, the functions in $B_{X'(\mu,\ell^s)}$ for the computation of the norm can be taken of the form $\sum_{i=1}^n x'_i e_i \in X'(\Omega, \Sigma, \mu;\ell^s)$, that is
$$
\big\| \big( \sum_{i=1}^n |x_i |^p \big)^\frac 1p \big\|_{X} =
\sup \Big\{ \big\| \big( \sum_{i=1}^n |x_i \cdot x'_i|^r \big)^\frac 1r \big\|_{L^1(\mu)}  :
\big\| \big( \sum_{i=1}^n |x'_i|^s \big)^\frac 1s \big\|_{X'} \le 1 \Big\}.
$$
\end{lemma}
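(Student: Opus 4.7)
The plan is to prove each inequality separately. The direction $\le$ is immediate from Lemma \ref{holder}: for every $\psi \in B_{X'(\Omega,\Sigma,\mu;\ell^s)}$ one has
$$
\|\phi \cdot \psi\|_{L^1(\Omega,\Sigma,\mu;\ell^r)} \;\le\; \|\phi\|_{X(\Omega,\Sigma,\mu;\ell^p)}\,\|\psi\|_{X'(\Omega,\Sigma,\mu;\ell^s)} \;\le\; \|\phi\|_{X(\Omega,\Sigma,\mu;\ell^p)},
$$
so both suprema in the statement are bounded above by $\|\phi\|_{X(\Omega,\Sigma,\mu;\ell^p)}$.

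For the reverse direction, I would exhibit an explicit extremal $\psi$ of the prescribed finite-sum form by a two-step pointwise construction. First, set $f(\omega) = \bigl(\sum_{i=1}^n |x_i(\omega)|^p\bigr)^{1/p}$, so that $\|\phi\|_{X(\Omega,\Sigma,\mu;\ell^p)} = \|f\|_X$. The order continuity of $X$ gives $X^\ast = X'$ (as recalled in the proof of Lemma \ref{holder}), so one can choose $g \in B_{X'}$ with $\int f g\, d\mu = \|f\|_X$. Second, I would define $x'_i(\omega)$ so that at each point the inner H\"older inequality $\bigl(\sum_i |x_i(\omega) x'_i(\omega)|^r\bigr)^{1/r} \le \|(x_i(\omega))\|_{\ell^p}\,\|(x'_i(\omega))\|_{\ell^s}$ is saturated and $\|(x'_i(\omega))\|_{\ell^s} = g(\omega)$. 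In the range $1<p,s<\infty$ the right formula is
$$
x'_i(\omega) \;=\; \mathrm{sign}(x_i(\omega))\,\frac{g(\omega)\,|x_i(\omega)|^{p/s}}{f(\omega)^{p/s}}
$$
on $\{f>0\}$ and $x'_i(\omega)=0$ otherwise; the algebraic identity $p - pr/s = r$, equivalent to $1/r = 1/p + 1/s$, is what makes the exponents match.

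With this choice, Lemma \ref{dense}(i) ensures $\psi=\sum_{i=1}^n x'_i e_i \in X'(\Omega,\Sigma,\mu;\ell^s)$ with $\|\psi\|_{X'(\Omega,\Sigma,\mu;\ell^s)} = \|g\|_{X'} \le 1$, and by construction $\|(\phi\cdot\psi)(\omega)\|_{\ell^r} = f(\omega)g(\omega)$ almost everywhere; integrating yields $\|\phi\cdot\psi\|_{L^1(\Omega,\Sigma,\mu;\ell^r)} = \int f g\, d\mu = \|f\|_X$, matching the upper bound. Since $\psi$ has exactly the prescribed finite-sum form, both suprema in the statement are attained and equal $\|\phi\|_{X(\Omega,\Sigma,\mu;\ell^p)}$. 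The key technical ingredient is the pointwise saturation of the inner H\"older inequality; the only real obstacles are the boundary cases $p=\infty$ (where one replaces the formula by one concentrating the $x'_i$ on indices where $|x_i(\omega)|$ is essentially maximal) and $s=\infty$ (where $r=p$ and the natural choice reduces to $x'_i=\mathrm{sign}(x_i)\,g$), each handled by a direct modification of the interior formula.
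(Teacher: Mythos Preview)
Your proposal is correct and follows essentially the same route as the paper's proof: both directions are obtained from Lemma~\ref{holder} and from an explicit pointwise construction of the extremal $\psi$ via a norming functional $g\in B_{X'}$ for $f=(\sum_i|x_i|^p)^{1/p}$ together with the H\"older-saturating choice $x'_i = |x_i|^{p/s}\,g/f^{p/s}$ (the paper writes the exponent as $(p-r)/r$, which equals $p/s$). The only cosmetic differences are your inclusion of a sign factor (harmless, since only $|x_i x'_i|$ enters) and your explicit discussion of the endpoint cases $p=\infty$ and $s=\infty$, which the paper leaves implicit.
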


\begin{proof}
The inequality $`` \ge "$ is a consequence of Lemma \ref{holder}. For the equality, consider $\{x_i\}_{i=1}^n\subset X$. By the Hahn-Banach Theorem, we can take $x' \in B_{X'}$ such that
$$
\big\| \big( \sum_{i=1}^n |x_i|^p \big)^\frac 1p \big\|_{X} = \langle  x',  \big( \sum_{i=1}^n |x_i|^p \big)^\frac 1p \rangle.
$$
For $i=1,\ldots, n$ let
$$
x'_i(\omega) = \left\{
\begin{array}{ccc}
 \frac{|x_i(\omega)|^{(p-r)/r} \, x'(\omega) }{ \big( \underset{i=1}{\overset{n}\sum} |x_i(\omega)|^p \big)^{1/s} }  &   & \textrm{if }\sum_{i=1}^n |x_i(\omega)|^p\neq0, \\
  &   &   \\
 0 &   &   \textrm{otherwise}.
\end{array}
\right.
$$
Note that since
$$
|x_i(\omega)|^{(p-r)/r}\leq \big( \sum_{i=1}^n |x_i(\omega)|^p \big)^\frac 1s,
$$
it follows that $x'_i\in X'$ for $i=1,\ldots, n$. Note also that
$$
\int \big( \sum_{i=1}^n |x_i \cdot x'_i|^r \big)^\frac 1r d\mu =
\int \big( \sum_{i=1}^n |x_i|^p \big)^{\frac 1p} x' d \mu
= \big\| \big( \sum_{i=1}^n |x_i|^p \big)^\frac 1p \big\|_{X}.
$$
Moreover, taking into account that
$$
\big(\frac{p-r}{r} \big) s= \frac{sp}{r} -s = p+s-s=p,
$$
we obtain
\begin{align*}
\big( \sum_{i=1}^n |x'_i|^s \big)^\frac 1s   = \Big( \sum_{i=1}^n \Big|\frac{|x_i|^{(p-r)/r} \, x' }{ \big( \sum_{i=1}^n |x_i|^p \big)^\frac 1s } \Big|^s \Big)^\frac 1s = x' \in B_{(X(\mu))'}.
\end{align*}
This proves the result.
\end{proof}

Given order continuous Banach function spaces $X$, $Y$ over $(\Omega, \Sigma, \mu)$ and $(\Omega', \Sigma', \nu)$ respectively, a linear operator $T:X \to Y$ and $1/r \le 1/q+1/s$, we can define a bilinear operator
$$
P_T:X(\Omega, \Sigma, \mu; \ell^q)_0 \times Y'(\Omega', \Sigma', \nu; \ell^s)_0 \to L^1(\nu, \ell^r)
$$
by means of the product defined in \eqref{eq:pointwiseproduct} for vector valued functions, as follows:
$$
P_T(\sum_{i=1}^n x_i e_i, \sum_{i=1}^n y'_i e_i)(\omega'):=
(\sum_{i=1}^n [Tx_i](\omega') e_i) \cdot ( \sum_{i=1}^n y'_i(\omega') e_i) =
\sum_{i=1}^n [Tx_i] (\omega') \cdot y'_i(\omega') e_i.
$$

The continuity of such a bilinear map is equivalent to the existence of a constant $C>0$ such that
$$
\big\| \sum_{i=1}^n Tx_i \, y'_i e_i \big\|_{L^1(\mu,\ell^r)}
\le C \, \big\|  \sum_{i=1}^n x_i  e_i  \big\| _{X(\Omega, \Sigma, \mu; \ell^q)} \cdot \big\|  \sum_{i=1}^n  y'_i e_i  \big\|_{Y'(\Omega', \Sigma', \nu; \ell^s)}.
$$
In this case, by Lemma \ref{dense}, $P_T$ uniquely extends to a continuous bilinear map on the space $X(\Omega, \Sigma, \mu; \ell^q)\times Y'(\Omega', \Sigma', \nu; \ell^s)$.

\begin{proposition}  \label{formulabil}
Let $X$, $Y$ be order continuous Banach function spaces over $(\Omega, \Sigma, \mu)$ and $(\Omega', \Sigma', \nu)$ respectively, and consider an operator $T:X \to Y$. Let $1/r=1/p+1/s$ and $q \le p$. The following assertions are equivalent.
\begin{itemize}
\item[(i)] $T$ is $(p,q)$-regular.
\item[(ii)] The bilinear map $P_T$ is continuous from $ X(\Omega, \Sigma, \mu; \ell^q)_0 \times Y'((\Omega', \Sigma', \nu; \ell^s)_0$  to $L^1(\nu, \ell^r)$.
\end{itemize}
Moreover, the continuity constant of the bilinear map $P_T$ equals $\rho_{p,q}(T)$.
\end{proposition}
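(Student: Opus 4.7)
The plan is to prove the two implications separately by combining the K\"othe--Bochner H\"older inequality (Lemma \ref{holder}) with the duality formula of Lemma \ref{dual}, both applied to the lattice $Y$ on the target side. The equality of constants will drop out of the proof because each implication yields one inequality. The conditions $1/r=1/p+1/s$ and $q\le p$ ensure $r\le p$, so Lemma \ref{dual} applies in $Y$ with indices $p,s,r$, and the $(p,q)$-regularity estimate bridges the input side (involving $q$) with the output side (involving $p$).

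For $(i)\Rightarrow(ii)$, fix $\phi=\sum_{i=1}^n x_i e_i\in X(\mu;\ell^q)_0$ and $\psi=\sum_{i=1}^n y'_i e_i\in Y'(\nu;\ell^s)_0$. The function $\Phi=\sum_{i=1}^n Tx_i\, e_i$ lies in $Y(\nu;\ell^p)$ by Lemma \ref{dense}(i), so Lemma \ref{holder} (applied to $\Phi$ and $\psi$ over $Y,Y'$) gives
$$
\|P_T(\phi,\psi)\|_{L^1(\nu;\ell^r)}=\|\Phi\cdot\psi\|_{L^1(\nu;\ell^r)}\le \|\Phi\|_{Y(\nu;\ell^p)}\,\|\psi\|_{Y'(\nu;\ell^s)}.
$$
Applying the $(p,q)$-regularity of $T$ to the first factor,
$$
\|\Phi\|_{Y(\nu;\ell^p)}=\Big\|\Big(\sum_{i=1}^n|Tx_i|^p\Big)^{1/p}\Big\|_Y\le \rho_{p,q}(T)\Big\|\Big(\sum_{i=1}^n|x_i|^q\Big)^{1/q}\Big\|_X=\rho_{p,q}(T)\|\phi\|_{X(\mu;\ell^q)},
$$
so $P_T$ is continuous on the dense subspaces with constant at most $\rho_{p,q}(T)$; Lemma \ref{dense}(ii) yields a unique continuous extension with the same norm.

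For $(ii)\Rightarrow(i)$, let $C$ be the continuity constant of $P_T$ and fix $\{x_i\}_{i=1}^n\subset X$. Apply Lemma \ref{dual} to the family $\{Tx_i\}_{i=1}^n\subset Y$ with indices $p,s,r$: then
$$
\Big\|\Big(\sum_{i=1}^n|Tx_i|^p\Big)^{1/p}\Big\|_Y=\sup\Big\{\Big\|\Big(\sum_{i=1}^n|Tx_i\cdot y'_i|^r\Big)^{1/r}\Big\|_{L^1(\nu)}:\Big\|\Big(\sum_{i=1}^n|y'_i|^s\Big)^{1/s}\Big\|_{Y'}\le 1\Big\}.
$$
The term inside the supremum equals $\|P_T(\sum x_i e_i,\sum y'_i e_i)\|_{L^1(\nu;\ell^r)}$, which by hypothesis is at most $C\,\|\sum x_i e_i\|_{X(\mu;\ell^q)}\,\|\sum y'_i e_i\|_{Y'(\nu;\ell^s)}\le C\,\|(\sum|x_i|^q)^{1/q}\|_X$. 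Taking the supremum over admissible $\{y'_i\}$ yields $(p,q)$-regularity with $\rho_{p,q}(T)\le C$, and combined with the forward direction this gives the equality of constants.

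The only delicate step is the reverse direction, where one must recognise that Lemma \ref{dual} gives exactly the required supremum representation in $Y$; once this is seen, the argument is a direct matching of the two expressions. A minor technical point is to verify that the hypothesis $q\le p$ together with $1/r=1/p+1/s$ guarantees $r\le p$ (and $r\le s$), which is needed so that Lemma \ref{dual} is applicable; this is immediate from $1/r\ge 1/p$ and $1/r\ge 1/s$.
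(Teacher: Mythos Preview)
Your proof is correct and follows essentially the same route as the paper: both directions hinge on Lemma~\ref{holder} for $(i)\Rightarrow(ii)$ and on the duality formula of Lemma~\ref{dual} for $(ii)\Rightarrow(i)$, applied to $Y$ with the triple $(p,s,r)$. Your write-up is slightly more explicit about verifying the index hypotheses $r\le p,s$ and about the density extension via Lemma~\ref{dense}(ii), but the argument is the same.
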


\begin{proof}
(i) $\Rightarrow$ (ii): Let $\{x_i\}_{i=1}^n\subset X$ and $\{y'_i\}\subset Y'$. By Lemma \ref{holder} we have
\begin{align*}
\int \big( \sum_{i=1}^n |Tx_i y'_i|^r \big)^\frac 1r \, d \nu
&\le
\big\| \big( \sum_{i=1}^n |Tx_i|^p \big)^\frac 1p \big\|_{Y}
\big\| \big( \sum_{i=1}^n | y'_i|^s \big)^\frac 1s \big\|_{Y'}\\
&\le \rho_{p,q}(T)  \big\| \big( \sum_{i=1}^n |x_i|^q \big)^\frac 1q \big\|_{X}
\big\| \big( \sum_{i=1}^n | y'_i|^s \big)^\frac 1s \big\|_{Y'}.
\end{align*}
This gives (ii), with $C \le \rho_{p,q}(T).$

(ii) $\Rightarrow$ (i): Suppose there is $C>0$ such that
$$
\int \big( \sum_{i=1}^n |Tx_i y'_i|^r \big)^\frac 1r \, d \nu \le
C \big\| \big( \sum_{i=1}^n |x_i|^q \big)^\frac 1q \big\|_{X}
\big\| \big( \sum_{i=1}^n | y'_i|^s \big)^\frac 1s \big\|_{Y'}
$$
for every $\{x_i\}_{i=1}^n\subset X$ and $\{y'_i\}\subset Y'$. By Lemma \ref{dual}, it follows that
\begin{align*}
\big\| \big( \sum_{i=1}^n |Tx_i |^p \big)^\frac 1p \big\|_{Y}& =
\sup \Big\{ \big\| \big( \sum_{i=1}^n |Tx_i \cdot y'_i|^r \big)^\frac 1r \big\|_{L^1(\nu)}  :
\big\| \big( \sum_{i=1}^n |y'_i|^s \big)^\frac 1s \big\|_{Y'} \le 1 \Big\}\\
& \le
C \big\| \big( \sum_{i=1}^n |x_i|^q \big)^\frac 1q \big\|_{X} .
\end{align*}
This also gives that $\rho_{p,q} (T) \le C$.
\end{proof}

\section{Lattice tensor norms and duality for $(p,q)$-regular operators} \label{tensor}

In this section we introduce some specific topologies for tensor products of Banach lattices, following a similar  procedure as in the case of the Laprest\'e tensor norms \cite{deflo}. This approach allows us to relate $(p,q)$-regular operators with other classical operator ideals by using standard tensor product duality.

Let us start by recalling the definitions and notation of well-known tensor norms. Given Banach spaces $X$ and $Y$, for $z\in X\otimes Y$ and $1\leq p\leq \infty$ set
\begin{align*}
\varepsilon(z)&= \sup \Big\{ x^*\otimes y^*(z):x^*\in B_{X^*},\,y^*\in B_{Y^*} \Big\},\\
\pi(z)&= \underset{z= \sum x_i \otimes y_i }{\inf} \Big\{  \sum \|x_i\|\|y_i\| \Big\},\\
g_p(z)&= \underset{z= \sum x_i \otimes y_i }{\inf} \Big\{ \Big( \sum \|x_i\|^p \Big)^{1/p} \cdot \sup_{y^* \in B_{Y^*}} \Big( \sum | \langle y_i, y^* \rangle |^{p'} \Big)^{1/p'} \Big\},\\
d_p(z)&= \underset{z= \sum x_i \otimes y_i }{\inf} \Big\{  \sup_{x^* \in B_{X^*}} \Big( \sum | \langle x_i, x^* \rangle |^{p'} \Big)^{1/p'} \cdot \Big( \sum \|y_i\|^p \Big)^{1/p} \Big\},\\
w_p(z)&= \underset{z= \sum x_i \otimes y_i }{\inf} \Big\{ \sup_{x^* \in B_{X^*}} \Big( \sum | \langle x_i, x^* \rangle |^{p} \Big)^{1/p}  \cdot \sup_{y^* \in B_{Y^*}} \Big( \sum | \langle y_i, y^* \rangle |^{p'} \Big)^{1/p'} \Big\}.
\end{align*}
These are the injective, projective and some particular cases of the Laprest\'e tensor norms (Chevet-Saphar tensor norms, see \cite[12.5]{deflo}).

\begin{definition}
Let $X$, $Y$ be Banach lattices and $1 \le q \le p \le \infty$. For $z\in X \otimes Y$, let us define the positively homogeneous function
$$
\phi_{p,q}(z) :=
\inf \Big\{   \big\| \big( \sum |x_i|^{q} \big)^{1/q} \big\|_X \cdot
\big\| \big( \sum |y_i|^{p'} \big)^{1/p'} \big\|_Y : \, z= \sum_{i=1}^n  x_i \otimes y_i \Big\}
$$
and the seminorm
$$
r_{p,q}(z):=
\inf \Big\{ \sum_{j=1}^m   \phi_{p,q}(z_j): \, z= \sum_{j=1}^m z_j\Big\}.
$$
\end{definition}

\begin{proposition} \label{proppqreg}
Let $1 \le q \le p \le \infty$, $X$ and $Y$ Banach lattices.
\begin{itemize}
\item[(1)]For $z \in X \otimes Y$, $\varepsilon(z) \le r_{p,q}(z) \le \pi(z).$ Consequently, $r_{p,q}$ is a norm.
\item[(2)] If $X$ is $q$-convex and $Y$ is $p'$-convex (with $M^{(q)}(X)=1=M^{(p')}(Y)$), then  $\phi_{p,q}$ is a quasi-norm with constant $2^{1-t}$ where $1/t=1/q + 1/p'$. In particular, for $p=q$,
$r_{p,p}=\phi_{p,p}$ is a norm.
\end{itemize}
\end{proposition}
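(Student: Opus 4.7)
For part (1), the bound $r_{p,q}(z) \le \pi(z)$ is immediate from the trivial one-term estimate $\phi_{p,q}(x \otimes y) \le \|x\|_X \|y\|_Y$: given any representation $z = \sum_i x_i \otimes y_i$ we have $r_{p,q}(z) \le \sum_i \phi_{p,q}(x_i \otimes y_i) \le \sum_i \|x_i\|_X \|y_i\|_Y$, and taking the infimum gives $r_{p,q}(z) \le \pi(z)$. For the reverse $\varepsilon(z) \le r_{p,q}(z)$, by the definition of $r_{p,q}$ as an infimum of sums of $\phi_{p,q}$-values, it suffices to establish $|\langle x^* \otimes y^*, w\rangle| \le \phi_{p,q}(w)$ for arbitrary $x^*\in B_{X^*}$, $y^* \in B_{Y^*}$ and $w \in X\otimes Y$. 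Fix a decomposition $w = \sum_i x_i\otimes y_i$. Combining scalar H\"older with conjugate exponents $q, q'$, the continuous embedding $\ell^{p'} \hookrightarrow \ell^{q'}$ (valid because $q\le p$ entails $q'\ge p'$), and the classical lattice inequality \eqref{eq:genholder} in each factor yields
\begin{align*}
\Big|\sum_i \langle x_i, x^*\rangle\langle y_i, y^*\rangle\Big|
&\le \Big(\sum_i |\langle x_i, x^*\rangle|^q\Big)^{1/q}\Big(\sum_i |\langle y_i, y^*\rangle|^{q'}\Big)^{1/q'}\\
&\le \Big\|\Big(\sum_i|x_i|^q\Big)^{1/q}\Big\|_X\cdot \Big\|\Big(\sum_i|y_i|^{p'}\Big)^{1/p'}\Big\|_Y.
\end{align*}
Taking the infimum over decompositions gives $\varepsilon \le r_{p,q}$; since $\varepsilon$ separates points, so does the seminorm $r_{p,q}$, making it a norm.

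For part (2), fix decompositions $z_j = \sum_i x_i^{(j)}\otimes y_i^{(j)}$ with
$$\alpha_j = \Big\|\Big(\sum_i |x_i^{(j)}|^q\Big)^{1/q}\Big\|_X, \quad \beta_j = \Big\|\Big(\sum_i |y_i^{(j)}|^{p'}\Big)^{1/p'}\Big\|_Y, \qquad j=1,2,$$
and consider the scaled concatenation
$$z_1 + z_2 = \sum_i (\lambda_1 x_i^{(1)})\otimes (\lambda_1^{-1} y_i^{(1)}) + \sum_i (\lambda_2 x_i^{(2)})\otimes (\lambda_2^{-1} y_i^{(2)})$$
for parameters $\lambda_1,\lambda_2>0$. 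The hypotheses $M^{(q)}(X)=M^{(p')}(Y)=1$ together with Krivine's calculus yield
$$\phi_{p,q}(z_1+z_2) \le \bigl((\lambda_1\alpha_1)^q + (\lambda_2\alpha_2)^q\bigr)^{1/q} \bigl((\beta_1/\lambda_1)^{p'} + (\beta_2/\lambda_2)^{p'}\bigr)^{1/p'}.$$
Writing $a_j = \lambda_j\alpha_j$ and $b_j=\beta_j/\lambda_j$, so that the products $a_jb_j = \alpha_j\beta_j$ are fixed, the right-hand side becomes $(a_1^q+a_2^q)^{1/q}(b_1^{p'}+b_2^{p'})^{1/p'}$; H\"older's inequality with exponents $q/t$ and $p'/t$ (note $t/q+t/p'=1$) together with its sharp case show that the minimum value is exactly $(\alpha_1^t\beta_1^t + \alpha_2^t\beta_2^t)^{1/t}$, attained for $a_j = (\alpha_j\beta_j)^{t/q}$, $b_j=(\alpha_j\beta_j)^{t/p'}$. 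Taking infima over decompositions of $z_1,z_2$ yields the $t$-subadditivity
$$\phi_{p,q}(z_1+z_2)^t \le \phi_{p,q}(z_1)^t + \phi_{p,q}(z_2)^t,$$
from which the quasi-triangle inequality follows with the stated constant via the standard numerical comparison between $\ell^t$ and $\ell^1$ on two-point sets. When $p=q$ we obtain $t=1$, so the above is the ordinary triangle inequality; hence $\phi_{p,p}$ is a seminorm, and the identity $r_{p,p}=\phi_{p,p}$ follows because $r_{p,p}(z)\le\phi_{p,p}(z)$ holds trivially (the one-term sum decomposition $z=z$) while the converse inequality is exactly subadditivity of $\phi_{p,p}$ applied to any sum decomposition of $z$.

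\textbf{Main obstacle.} The technical core is the optimization: recognizing that minimizing the product of an $\ell^q$-norm of $(a_j)$ and an $\ell^{p'}$-norm of $(b_j)$ under the constraint that each $a_jb_j$ is fixed collapses precisely to the equality case of H\"older's inequality, and produces the clean bound $(\alpha_1^t\beta_1^t+\alpha_2^t\beta_2^t)^{1/t}$. The unit convexity constants are essential here --- without them the scaled concatenation would not produce a bound that is homogeneous of the right form in the data $\alpha_j,\beta_j$.
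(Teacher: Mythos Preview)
Your argument for part~(1) coincides with the paper's, differing only cosmetically: you apply H\"older with exponents $(q,q')$ and then use $\ell^{p'}\hookrightarrow\ell^{q'}$, whereas the paper uses $(p,p')$ and $\ell^q\hookrightarrow\ell^p$.

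For part~(2) the two arguments reach the same $t$-subadditivity $\phi_{p,q}(z_1+z_2)^t\le\phi_{p,q}(z_1)^t+\phi_{p,q}(z_2)^t$, but by slightly different routes. The paper fixes from the outset representations of $z_1,z_2$ normalized so that $\alpha_j=(\phi_{p,q}(z_j)+\epsilon)^{t/q}$ and $\beta_j=(\phi_{p,q}(z_j)+\epsilon)^{t/p'}$, and then simply applies the triangle inequalities in $X_{[q]}$ and $Y_{[p']}$. Your optimization over the scalings $\lambda_j$ is a more transparent version of the same idea: the H\"older argument you give shows that the paper's normalization is exactly the minimizer, so your approach explains \emph{why} that particular balancing is the correct one. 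One caveat worth recording: the ``stated constant'' $2^{1-t}$ that you invoke (and that the paper asserts) is not what the $t$-subadditivity actually delivers. The comparison $(\alpha^t+\beta^t)^{1/t}\le 2^{1/t-1}(\alpha+\beta)$ for $0<t\le1$ gives quasi-norm constant $2^{1/t-1}$, which coincides with $2^{1-t}$ only at $t=1$; your deferral to ``the stated constant'' inherits this slip from the paper rather than correcting it.
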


\begin{proof}
(1) For the inequality $\varepsilon(z) \le r_{p,q}(z)$ it is enough to prove that $\varepsilon(z) \le \phi_{p,q}(z)$ for every $z \in X \otimes Y.$ For every representation $z= \sum_{i=1}^n x_i \otimes y_i$ and $x^* \in B_{X^*}$, $y^* \in B_{Y^*}$, we have
\begin{align*}
\sum_{i=1}^n \langle x_i,x^* \rangle \langle y_i, y^* \rangle & \le (\sum_{i=1}^n  |\langle x_i,x^* \rangle|^p)^{1/p} \cdot(\sum_{i=1}^n  |\langle y_i,y^* \rangle |^{p'} )^{1/p'}\\
&\le (\sum_{i=1}^n  |\langle x_i,x^* \rangle|^q)^{1/q} \cdot (\sum_{i=1}^n  |\langle y_i,y^* \rangle |^{p'} )^{1/p'}\\
&\le \big\| \big( \sum |x_i|^{q} \big)^{1/q} \big\|_X \cdot \big\| \big( \sum |y_i|^{p'} \big)^{1/p'} \big\|_Y.
\end{align*}
This gives $\varepsilon(z) \le r_{p,q}(z)$. For $r_{p,q}(z) \le \pi(z)$, just note that for each representation $z =\sum_{j=1}^m x_j \otimes y_j$,
$$
r_{p,q}(z) \le \sum_{j=1}^m \phi_{p,q}(x_j \otimes y_j) \le \sum_{j=1}^m \|x_j\|_X \, \|y_j\|_Y.
$$

(2) Let $1/t=1/q + 1/p'$, and note that $t \le 1$. Given $z_1,z_2\in X\otimes Y$ and $\epsilon >0$, let $z_1 = \sum_{i=1}^n x^1_i \otimes y^1_i$ and $z_2 = \sum_{i=1}^n x^2_i \otimes y^2_i$ such that
$$
\big\|  \sum_{i=1}^n |x^1_i|^{q}  \big\|^{1/q}_{X_{[q]}}
=
\big\| \big( \sum_{i=1}^n |x^1_i|^{q} \big)^{1/q} \big\|_X
\le (\phi_{p,q}(z_1) +\epsilon)^{t/q},
$$
$$
\big\| \sum_{i=1}^n |y^1_i|^{p'}  \big\|^{1/p'}_{Y_{[p']}} =
\big\| \big( \sum_{i=1}^n |y^1_i|^{p'} \big)^{1/p'} \big\|_Y \le (\phi_{p,q}(z_1) +\epsilon)^{t/p'},
$$
and
$$
\big\|  \sum_{i=1}^n |x^2_i|^{q}  \big\|^{1/q}_{X_{[q]}}
=
\big\| \big( \sum_{i=1}^n |x^2_i|^{q} \big)^{1/q} \big\|_X
\le (\phi_{p,q}(z_2) +\epsilon)^{t/q},
$$
$$
\big\| \sum_{i=1}^n |y^2_i|^{p'}  \big\|^{1/p'}_{Y_{[p']}} =
\big\| \big( \sum_{i=1}^n |y^2_i|^{p'} \big)^{1/p'} \big\|_Y \le (\phi_{p,q}(z_2) +\epsilon)^{t/p'}.
$$
Then
\begin{align*}
\phi_{p,q}(z_1+z_2)& \le \bigg\| \big( \sum_{i=1}^n |x^1_i|^{q} + \sum_{i=1}^n |x^2_i|^{q}  \big)^{1/q} \bigg\|_X  \bigg\| \big( \sum_{i=1}^n |y^1_i|^{p'} +\sum_{i=1}^n |y^2_i|^{p'}  \big)^{1/p'} \bigg\|_Y\\
&= \bigg\|  \sum_{i=1}^n |x^1_i|^{q}   + \sum_{i=1}^n |x^2_i|^{q}  \bigg\|^{1/q}_{X_{[q]}}   \bigg\| \sum_{i=1}^n |y^1_i|^{p'}  +\sum_{i=1}^n |y^2_i|^{p'}  \bigg\|^{1/p'}_{Y_{[p']}}\\
&\le \Big(\big\|  \sum_{i=1}^n |x^1_i|^{q} \big\|_{X_{[q]}}    + \big\| \sum_{i=1}^n |x^2_i|^{q}  \big\|_{X_{[q]}}  \Big)^{1/q}  \Big(\big\| \sum_{i=1}^n |y^1_i|^{p'} \big\|_{Y_{[p']}}  +\| \sum_{i=1}^n |y^2_i|^{p'}  \big\|_{Y_{[p']}}  \Big)^{1/p'}\\
&\le\Big(  (\phi_{p,q}(z_1) +\epsilon)^{t} + (\phi_{p,q}(z_2) +\epsilon)^{t}  \Big)^{1/q}  \Big(  (\phi_{p,q}(z_1) +\epsilon)^{t} + (\phi_{p,q}(z_2) +\epsilon)^{t}  \Big)^{1/p'}\\
&\le \Big( 2^{1-t} (\phi_{p,q}(z_1) + \phi_{p,q}(z_2) + 2\epsilon) \Big)^{t/q}  \Big(2^{1-t}  (\phi_{p,q}(z_1) + \phi_{p,q}(z_2) + 2\epsilon) \Big)^{t/p'} \\
&= 2^{1-t} (\phi_{p,q}(z_1) + \phi_{p,q}(z_2) + 2\epsilon).
\end{align*}
As $\varepsilon>0$ was arbitrary, it follows that $\phi_{p,q}(z_1+z_2)\leq 2^{1-t}(\phi_{p,q}(z_1) + \phi_{p,q}(z_2))$ as claimed.
\end{proof}

Now we can provide the representation theorem for $(p,q)$-regular operators. Recall that the trace duallity allows us to identify $(X\otimes Y^*)^*$ with a certain subspace of $L(X,Y^{**})$: for $\varphi\in (X\otimes Y^*)^*$, take $T_\varphi:X\to Y^{**}$ given by
$$
\langle T_\varphi(x),y^*\rangle=\varphi(x\otimes y^*)
$$
for $x\in X$ and $y^*\in Y^*$.

\begin{theorem} \label{reppqreg}
Let $1 \le q \le p \le \infty$. Then
$$
\mathcal R_{p,q}(X,Y)= \Big( X \otimes_{r_{p,q}} Y^* \Big)^* \cap \mathcal L(X,Y).
$$
 isometrically.
\end{theorem}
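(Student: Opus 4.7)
The plan is a standard trace-duality argument: to a bounded operator $T \colon X \to Y$ I associate the linear functional
$$
\varphi_T\colon X \otimes Y^* \longrightarrow \mathbb R, \qquad \varphi_T(x \otimes y^*) = \langle Tx, y^*\rangle,
$$
and I show that $T \in \mathcal R_{p,q}(X,Y)$ if and only if $\varphi_T$ is continuous for $r_{p,q}$, with $\|\varphi_T\|_{(X \otimes_{r_{p,q}} Y^*)^*} = \rho_{p,q}(T)$. Since the identification $\varphi \leftrightarrow T_\varphi$ between $(X \otimes Y^*)^*$ and a subspace of $L(X, Y^{**})$ is standard, this will yield the stated isometric equality once we intersect with $\mathcal L(X,Y)$.

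\textbf{Step 1 (the easy inclusion).} Assume $T$ is $(p,q)$-regular. For any representation $z = \sum_{i=1}^n x_i \otimes y_i^*$, the generalized H\"older inequality \eqref{eq:genholder} applied with exponents $(r,p,s)=(1,p,p')$ gives
$$
|\varphi_T(z)| \le \sum_{i=1}^n |\langle Tx_i, y_i^*\rangle| \le \bigl\|(\sum |Tx_i|^p)^{1/p}\bigr\|_Y \bigl\|(\sum |y_i^*|^{p'})^{1/p'}\bigr\|_{Y^*},
$$
and the first factor is bounded by $\rho_{p,q}(T) \bigl\|(\sum |x_i|^q)^{1/q}\bigr\|_X$ by definition of $(p,q)$-regularity. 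Passing to the infimum over representations yields $|\varphi_T(z)| \le \rho_{p,q}(T) \phi_{p,q}(z)$, and then the subadditivity of $|\varphi_T|$ on sums $z = \sum_j z_j$ together with the definition of $r_{p,q}$ produces $|\varphi_T(z)| \le \rho_{p,q}(T)\, r_{p,q}(z)$.

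\textbf{Step 2 (the reverse inclusion).} Suppose $T \in \mathcal L(X,Y)$ and $|\varphi_T(z)| \le C \, r_{p,q}(z)$ for all $z$. Given $\{x_i\}_{i=1}^n \subset X$, I wish to bound $\bigl\|(\sum |Tx_i|^p)^{1/p}\bigr\|_Y$. The needed tool is the duality formula
$$
\bigl\|(\sum |Tx_i|^p)^{1/p}\bigr\|_Y = \sup\Bigl\{\sum |\langle Tx_i, y_i^*\rangle| : \bigl\|(\sum |y_i^*|^{p'})^{1/p'}\bigr\|_{Y^*} \le 1\Bigr\},
$$
which is Lemma \ref{dual} with $r=1$, $s=p'$. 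For fixed $\{y_i^*\}$ in the admissible set, I choose signs $\epsilon_i = \sign\langle Tx_i, y_i^*\rangle$ and set $z = \sum_i x_i \otimes (\epsilon_i y_i^*)$. Since taking signs leaves the expression $\bigl\|(\sum |\epsilon_i y_i^*|^{p'})^{1/p'}\bigr\|_{Y^*}$ invariant, from $r_{p,q}(z) \le \phi_{p,q}(z)$ I get
$$
\sum_i |\langle Tx_i, y_i^*\rangle| = \varphi_T(z) \le C \, r_{p,q}(z) \le C \bigl\|(\sum |x_i|^q)^{1/q}\bigr\|_X \bigl\|(\sum |y_i^*|^{p'})^{1/p'}\bigr\|_{Y^*}.
$$
Taking the supremum over admissible $\{y_i^*\}$ gives $\rho_{p,q}(T) \le C$, as required.

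\textbf{Main obstacle.} The only non-routine point is the duality formula used in Step 2: Lemma \ref{dual} was proved for order continuous Banach function spaces, while the theorem is stated for arbitrary Banach lattices $X,Y$. The standard way around this is to pass to the finite-dimensional sublattice of $Y$ generated by $\{Tx_i\}_{i=1}^n$ via Krivine's functional calculus \cite[Theorem 1.d.1]{LT2}: this sublattice embeds as a sublattice of some $C(K)$ (in particular as an order continuous Banach function space on a finite measure space), and the duality formula transfers back using the Hahn--Banach extension of the norming functional to $Y^*$. With this reduction in place, Steps 1 and 2 together deliver the isometric identification $\mathcal R_{p,q}(X,Y) = (X \otimes_{r_{p,q}} Y^*)^* \cap \mathcal L(X,Y)$.
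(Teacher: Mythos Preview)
Your argument is essentially the same trace-duality proof the paper gives: the paper packages your Step~1 H\"older estimate and your Step~2 duality formula into Proposition~\ref{formulabil} with $r=1$, $s=p'$, but the underlying computations are identical. One small correction: the sublattice of $Y$ generated by $\{Tx_i\}_{i=1}^n$ is not finite-dimensional in general, so your reduction as phrased does not quite work; however, the duality formula you need in Step~2 holds for arbitrary Banach lattices (this is precisely the sharpness of \eqref{eq:genholder} for $r=1$ that the paper attributes to \cite[7.2.2(2)]{Kusraev}), so no reduction is actually needed and the obstacle you flagged is not present.
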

\begin{proof}
To see the inclusion $\subseteq$ just take a $(p,q)$-regular operator $T:X \to Y$ and consider the trace duality with a tensor $z = \sum_{j=1}^m \sum_{i=1}^n x_i^j \otimes y_i^{j*} \in X \otimes Y^*$. We have by Proposition \ref{formulabil} for $r=1$ (and so $s=p'$) that
\begin{align*}
\langle T, z \rangle &= \sum_{j=1}^m \sum_{i=1}^n \langle T(x^j_i), y_i^{j*} \rangle \le \sum_{j=1}^m \sum_{i=1}^n | \langle T(x^j_i), y_i^{j*} \rangle |\\
&\le \sum_{j=1}^m  \rho_{p,q}(T) \, \big\| \big( \sum |x^j_i|^{q} \big)^{1/q} \big\|_X \cdot\big\| \big( \sum |y_i^{j*}|^{p'} \big)^{1/p'} \big\|_{Y^*}.
\end{align*}
Since this holds for all representations of $z$, it follows that the functional $\varphi_T$ defined by $T$ as $\varphi_T(x \otimes y^*)= \langle T(x), y^* \rangle$ satisfies that
$$
\| \varphi_T \| \le \rho_{p,q}(T).
$$

For the converse inclusion, take a functional $\varphi:  X \otimes_{r_{p,q}} Y^*  \to \mathbb R$ and define the operator $T_{\varphi}:X \to Y$ by $\langle T_\varphi(x), y^* \rangle = \varphi(x \otimes y^*)$. Let $\{x_i\}_{i=1}^n\subset X$.  For every $\epsilon >0$, there is a tensor $z =   \sum_{i=1}^n  x_i^* \otimes  y_i^{0*} $ with  $ \| (\sum_{i=1}^n|y_i^{0*}|^{p'} )^{\frac{1}{p'}} \|_{Y^*} \le 1$ such that
$$
\sup \bigg\{ \sum_{i=1}^n \langle T_\varphi(x_i), y_i^{*} \rangle : \, \big\|\big(\sum_{i=1}^n|y_i^*|^{p'}\big)^{\frac{1}{p'}}\big\|_{Y^*} \le 1 \bigg\}  \leq  \sum_{i=1}^n \langle T_\varphi(x_i), y_i^{0*} \rangle+\varepsilon.
$$
By Proposition \ref{formulabil} for $r=1$, it follows that
\begin{align*}
\bigg\|\bigg(\sum_{i=1}^n|T_{\varphi}(x_i)|^p\bigg)^{\frac1p}\bigg\| - \epsilon&\leq \sum_{i=1}^n \langle T_\varphi(x_i), y_i^{0*} \rangle
= \varphi \big( \sum_{i=1}^n x_i \otimes y_i^{0*}  \big)  \le \|\varphi\|  r_{p,q}(z)\\
&\le \| \varphi\| \big\| ( \sum_{i=1}^n |x_i|^q)^{1/q} \big\|_X \big\|(\sum_{i=1}^n|y_i^{0*}|^{p'})^{\frac{1}{p'}} \big\|_{Y^*}\\
&\le \| \varphi \|  \big\| ( \sum_{i=1}^n |x_i|^q)^{1/q} \big\|_X.
\end{align*}
Therefore, $T_\varphi$ is $(p,q)$-regular and $\rho_{p,q}(T_\varphi) \le \|\varphi \|.$ This finishes the proof.
\end{proof}

This approach fits actually with a more general framework, in which analogous tensor product representations are also possible for the case of $p$-convex and $p$-concave operators. In order to compare these with classical operator ideals, we introduce the following.

\begin{definition}
Let $1 \le q \le p \le \infty$, and $X$, $Y$ Banach lattices.
For $z\in X \otimes Y$, define the positively homogeneous functions
$$
\delta_{p,q}(z) := \inf \Big\{   \big(  \sum \|x_i\|_X^{q} \big)^{1/q}\big\| \big( \sum |y_i|^{p'} \big)^{1/p'} \big\|_Y : \, z= \sum_{i=1}^n  x_i^j \otimes y_i^j \Big\}
$$
$$
\iota_{p,q}(z) := \inf \Big\{   \big\| \big( \sum |x_i|^{q} \big)^{1/q} \big\|_X \big( \sum \|y_i\|_Y^{p'} \big)^{1/p'} : \, z= \sum_{i=1}^n  x_i^j \otimes y_i^j \Big\},
$$
and the corresponding seminorms
$$
h_{p,q}(z):=
\inf \Big\{ \sum_{j=1}^m   \delta_{p,q}(z_j): \, z= \sum_{j=1}^m z_j\Big\}.
$$
$$
k_{p,q}(z):=
\inf \Big\{ \sum_{j=1}^m   \iota_{p,q}(z_j): \, z= \sum_{j=1}^m z_j\Big\}.
$$
\end{definition}

Let us write $\mathcal{CX}_{p.q}(X,Y)$ for the space of $(p,q)$-convex operators from the Banach space $X$ to the Banach lattice $Y$, and $\mathcal{CC}_{p.q}(X,Y)$ for the space of $(p,q)$-concave operators from the Banach lattice $X$ to the Banach space $Y$. The following facts can be proved arguing as in Proposition \ref{proppqreg} and Theorem \ref{reppqreg}.

\begin{proposition} \label{proppqcon}
Let $1 \le q \le p \le \infty$, $X$ and $Y$ Banach lattices.
\begin{itemize}
\item[(1)]
For $z \in X \otimes Y$, $\varepsilon(z) \le h_{p,q}(z) \le \pi(z)$ and
$\varepsilon(z) \le k_{p,q}(z) \le \pi(z)$. Consequently, $h_{p,q}$ and $k_{p.q}$ are  norms.

\item[(2)] If $Y$ is $p'$-convex (constant 1),
then  $\delta_{p,q}$ is a quasi-norm with constant $2^{1-t}$ where $1/t=1/q + 1/p'$. In particular,
$h_{p,p}=\delta_{p,p}$ is a norm.

\item[(3)] If $X$ is $q$-convex (constant 1), then
$\iota_{p,q}$ is a quasi-norm with constant $2^{1-t}$ where $1/t=1/q + 1/p'$.  In particular,
$k_{p,p}=\iota_{p,p}.$
\end{itemize}
\end{proposition}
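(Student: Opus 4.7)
The plan is to mirror the proofs of Proposition \ref{proppqreg}(1) and (2), exploiting the fact that $\delta_{p,q}$ and $\iota_{p,q}$ differ from $\phi_{p,q}$ only by replacing one of the two lattice quasi-norm factors by a plain $\ell^q$- or $\ell^{p'}$-sum of scalars.

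For part (1), the first step is the pointwise estimates $\varepsilon(z) \le \delta_{p,q}(z)$ and $\varepsilon(z) \le \iota_{p,q}(z)$ on each tensor. Given a representation $z = \sum_{i=1}^n x_i \otimes y_i$ and functionals $x^* \in B_{X^*}$, $y^* \in B_{Y^*}$, I apply H\"older with conjugate exponents $q$ and $q'$ to obtain
$$
\sum_{i=1}^n \langle x_i, x^* \rangle \langle y_i, y^* \rangle \le \Big( \sum |\langle x_i,x^*\rangle|^q \Big)^{1/q} \Big( \sum |\langle y_i,y^*\rangle|^{q'} \Big)^{1/q'}.
$$
Since $q \le p$ gives $p' \le q'$, the embedding $\ell^{p'} \hookrightarrow \ell^{q'}$ lets me replace $q'$ by $p'$ in the second factor. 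For $\delta_{p,q}$ I then bound $(\sum |\langle x_i,x^*\rangle|^q)^{1/q} \le (\sum \|x_i\|_X^q)^{1/q}$ trivially via $|\langle x_i,x^*\rangle| \le \|x_i\|_X$, and use the lattice-summing inequality recalled in the introduction to get $(\sum |\langle y_i,y^*\rangle|^{p'})^{1/p'} \le \|(\sum |y_i|^{p'})^{1/p'}\|_Y$; for $\iota_{p,q}$ the roles of $X$ and $Y$ are reversed. Taking infima over representations yields $\varepsilon \le \delta_{p,q},\iota_{p,q}$, and sub-additivity of $\varepsilon$ propagates this to $\varepsilon \le h_{p,q}, k_{p,q}$. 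The upper bounds $h_{p,q}, k_{p,q} \le \pi$ follow by evaluating on rank-one tensors, where $\delta_{p,q}(x\otimes y), \iota_{p,q}(x \otimes y) \le \|x\|_X\|y\|_Y$. Because $h_{p,q}$ and $k_{p,q}$ are defined as infima of sums, they are automatically sub-additive and positively homogeneous, hence seminorms, and the lower bound by $\varepsilon$ promotes them to genuine norms.

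For parts (2) and (3) the quasi-triangle inequalities for $\delta_{p,q}$ and $\iota_{p,q}$ follow the same balancing scheme as in Proposition \ref{proppqreg}(2). Given $\epsilon > 0$ and near-optimal representations $z_j = \sum_i x_i^j \otimes y_i^j$ of $z_1, z_2$, I rescale each so that
$$
\Big(\sum_i \|x_i^j\|_X^q\Big)^{1/q} \le (\delta_{p,q}(z_j)+\epsilon)^{t/q}, \qquad \Big\|\Big(\sum_i |y_i^j|^{p'}\Big)^{1/p'}\Big\|_Y \le (\delta_{p,q}(z_j)+\epsilon)^{t/p'},
$$
where $1/t = 1/q + 1/p'$, so that their product is exactly $\delta_{p,q}(z_j)+\epsilon$. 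Concatenating the representations to bound $\delta_{p,q}(z_1+z_2)$ by the product of $(\sum_{j,i}\|x_i^j\|_X^q)^{1/q}$ and $\|(\sum_{j,i}|y_i^j|^{p'})^{1/p'}\|_Y$, the first factor is controlled purely at the scalar level, whereas the second uses the $p'$-convexity of $Y$ (with constant $1$), which makes the $p'$-concavification $Y_{[p']}$ a Banach lattice and therefore supplies the inequality $\|u+v\|_{Y_{[p']}} \le \|u\|_{Y_{[p']}}+\|v\|_{Y_{[p']}}$. Combining both estimates with the scalar concavity bound $a^t+b^t \le 2^{1-t}(a+b)^t$ and the identity $1/q+1/p'=1/t$ reproduces the constant $2^{1-t}$ exactly as in Proposition \ref{proppqreg}(2). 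Part (3) is entirely symmetric: the $q$-convexity of $X$ with constant $1$ makes $X_{[q]}$ a Banach lattice and provides the triangle inequality on the $x$-side, while the $y$-side becomes a scalar computation.

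The main obstacle is purely the algebraic bookkeeping of exponents in the rescaling step; no new idea beyond those already deployed in Proposition \ref{proppqreg} is required. The ``in particular'' claims follow by noting that for $p=q$ one has $1/t = 1/q + 1/q' = 1$, hence $t=1$ and the quasi-norm constant $2^{1-t}$ equals $1$; thus $\delta_{p,p}$ and $\iota_{p,p}$ are already sub-additive, which forces $h_{p,p}=\delta_{p,p}$ and $k_{p,p}=\iota_{p,p}$, and both are genuine norms by the lower bound from $\varepsilon$ established in part (1).
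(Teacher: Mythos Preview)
Your proposal is correct and follows precisely the approach the paper intends: the paper does not give an independent proof of Proposition \ref{proppqcon} but simply states that it ``can be proved arguing as in Proposition \ref{proppqreg} and Theorem \ref{reppqreg},'' and your write-up carries out exactly that adaptation, replacing one lattice factor by a scalar $\ell^q$- or $\ell^{p'}$-sum and observing that the corresponding convexity hypothesis on that side becomes unnecessary. The exponent bookkeeping, the rescaling trick, and the derivation of the $p=q$ case (where $t=1$ forces $h_{p,p}=\delta_{p,p}$ and $k_{p,p}=\iota_{p,p}$) are all handled correctly.
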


\begin{theorem} \label{reppqcom}
Let $1 \le q \le p \le \infty$. Then we have the following isometric identities:
\begin{itemize}
\item[(1)]
$
\mathcal{CX}_{p,q}(X,Y)= \Big( X \otimes_{h_{p,q}} Y^* \Big)^* \cap \mathcal L(X,Y),
$
\item[(2)]
$
\mathcal{CC}_{p,q}(X,Y)= \Big( X \otimes_{k_{p,q}} Y^* \Big)^* \cap \mathcal L(X,Y).
$

\end{itemize}
\end{theorem}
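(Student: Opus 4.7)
The plan is to proceed in complete analogy with the proof of Theorem \ref{reppqreg}, treating (1) and (2) in parallel: the only difference between the three norms $\phi_{p,q}$, $\delta_{p,q}$, $\iota_{p,q}$ is that one lattice mixed norm is replaced by a scalar $\ell^q$- or $\ell^{p'}$-sum of norms, and each replacement on the tensor side matches exactly the replacement of one lattice side condition on the operator by a convexity/concavity side condition.

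For the inclusion ``$\subseteq$'' in (1), I would take $T\in \mathcal{CX}_{p,q}(X,Y)$ with convexity constant $M$, evaluate its trace functional $\varphi_T(x\otimes y^*):=\langle T(x),y^*\rangle$ on a tensor $z=\sum_{i=1}^n x_i\otimes y_i^*\in X\otimes Y^*$, and apply the general H\"older inequality \eqref{eq:genholder} with $r=1$, $s=p'$ in $Y$:
$$
|\langle T,z\rangle|\le\sum_{i=1}^n|\langle Tx_i,y_i^*\rangle|\le \big\|(\sum|Tx_i|^p)^{1/p}\big\|_Y\cdot\big\|(\sum|y_i^*|^{p'})^{1/p'}\big\|_{Y^*}.
$$
The definition of $(p,q)$-convexity then bounds the first factor by $M(\sum\|x_i\|_X^q)^{1/q}$, which is precisely $\delta_{p,q}$; taking infima over representations and over decompositions $z=\sum z_j$ yields $\|\varphi_T\|\le M$. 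For (2), the analogous computation replaces the lattice H\"older step by the scalar estimate $|\langle Tx_i,y_i^*\rangle|\le\|Tx_i\|\,\|y_i^*\|$ followed by H\"older in $\ell^p$, and finally by $(p,q)$-concavity of $T$, landing on the $\iota_{p,q}$-bound.

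For ``$\supseteq$'' in (1), given $\varphi\in(X\otimes_{h_{p,q}}Y^*)^*$, define $T_\varphi\in\mathcal L(X,Y)$ by trace duality, and for a fixed family $\{x_i\}_{i=1}^n\subset X$ reproduce the final display of Theorem \ref{reppqreg}: for every $\varepsilon>0$ choose $\{y_i^{0*}\}$ with $\|(\sum|y_i^{0*}|^{p'})^{1/p'}\|_{Y^*}\le 1$ so that
$$
\big\|(\sum|T_\varphi x_i|^p)^{1/p}\big\|_Y-\varepsilon\le \sum_{i=1}^n\langle T_\varphi x_i,y_i^{0*}\rangle=\varphi\Big(\sum_{i=1}^n x_i\otimes y_i^{0*}\Big),
$$
and estimate the right-hand side by $\|\varphi\|\,h_{p,q}\le\|\varphi\|\,\delta_{p,q}\le\|\varphi\|(\sum\|x_i\|^q)^{1/q}$. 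This gives $(p,q)$-convexity with constant at most $\|\varphi\|$. In (2) one uses instead $\ell^p$-duality together with Hahn-Banach functionals normalising each $T_\varphi x_i$ to express $(\sum\|T_\varphi x_i\|^p)^{1/p}$ as a supremum of $\sum\langle T_\varphi x_i,y_i^*\rangle$ over families with $(\sum\|y_i^*\|^{p'})^{1/p'}\le 1$, and concludes via $k_{p,q}\le\iota_{p,q}$.

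The main technical point is the supremum representation of $\|(\sum|T_\varphi x_i|^p)^{1/p}\|_Y$ used in the converse for (1). In Theorem \ref{reppqreg} this was extracted from Proposition \ref{formulabil} and ultimately from Lemma \ref{dual} under order continuity. For the present statement the weaker identity needed (corresponding to $r=1$, $s=p'$) can be obtained directly: apply Hahn-Banach once to the single element $(\sum|T_\varphi x_i|^p)^{1/p}\in Y$ to produce an optimising $y^*\in B_{Y^*}$, and then mimic the explicit construction of the $y_i^*$ from the proof of Lemma \ref{dual}, which is purely algebraic inside the lattice. With this observation in hand, the rest of the argument for both (1) and (2) is a transcription of the proof of Theorem \ref{reppqreg} with $\phi_{p,q}$ replaced by $\delta_{p,q}$, respectively $\iota_{p,q}$, and isometry follows from matching the two inequalities $\|\varphi_T\|\le M$ and $\rho\le\|\varphi\|$ obtained in the two halves.
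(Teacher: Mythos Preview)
Your proposal is correct and follows exactly the approach indicated by the paper, which simply says these facts ``can be proved arguing as in Proposition \ref{proppqreg} and Theorem \ref{reppqreg}.'' Your additional remark on obtaining the supremum representation of $\|(\sum|T_\varphi x_i|^p)^{1/p}\|_Y$ directly via Hahn--Banach and the algebraic construction of Lemma \ref{dual} (rather than invoking Proposition \ref{formulabil}, which carries an order-continuity hypothesis) is a useful clarification that the paper glosses over.
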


This point of view allows us to state the relations among $(p,q)$-regular, $(p,q)$-convex and $(p,q)$-concave operators in a straightforward way by comparing the norms appearing in the corresponding representations.

\begin{proposition}
Let $1 \le q \le p \le \infty$. Then we have the following relations.
\begin{itemize}

\item[(1)]
\begin{itemize}
\item[(i)] If $X$ is $q$-convex, then $\mathcal{R}_{p,q}(X,Y) \subseteq \mathcal{CX}_{p,q}(X,Y).$

\item[(ii)]  If $X$ is $q$-concave, then $\mathcal{CX}_{p,q}(X,Y) \subseteq \mathcal{R}_{p,q}(X,Y).$

\item[(iii)] If $X$ is an $L^q$-space, then $\mathcal{R}_{p,q}(X,Y) = \mathcal{CX}_{p,q}(X,Y).$

\end{itemize}

\vspace{2mm}

\item[(2)]\begin{itemize}
\item[(i)] If $Y$ is $p$-concave, then $\mathcal{R}_{p,q}(X,Y) \subseteq \mathcal{CC}_{p,q}(X,Y).$

\item[(ii)]  If $Y$ is $p$-convex, then $\mathcal{CC}_{p,q}(X,Y) \subseteq \mathcal{R}_{p,q}(X,Y).$

\item[(iii)] If $Y$ is an $L^p$-space, then $\mathcal{R}_{p,q}(X,Y) = \mathcal{CC}_{p,q}(X,Y).$

\end{itemize}

\vspace{2mm}

\item[(3)] $\mathcal L(L^q,L^p) = \mathcal{R}_{p,q}(L^q,L^p) = \mathcal{CC}_{p,q}(L^q,L^p) =\mathcal{CX}_{p,q}(L^q,L^p).$

\end{itemize}
\end{proposition}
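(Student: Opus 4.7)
The plan is to read off all three statements from the tensor‑product representations in Theorems \ref{reppqreg} and \ref{reppqcom} by pointwise comparison of the three generating functions $\phi_{p,q}$, $\delta_{p,q}$ and $\iota_{p,q}$ on $X\otimes Y^{*}$. The underlying principle is the elementary fact that if two positively homogeneous functions on a vector space $V$ satisfy $\alpha(z)\le C\,\beta(z)$ for every $z$, then the identity map $(V,\beta)\to(V,\alpha)$ is continuous with norm $\le C$, so by adjointness $(V,\alpha)^{*}\subseteq(V,\beta)^{*}$ with inclusion constant $C$; intersecting with $\mathcal L(X,Y)$ transports such inclusions to the operator‑ideal level.

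For part (1), on $X\otimes Y^{*}$ the functions $\phi_{p,q}$ and $\delta_{p,q}$ differ only in their $X$‑side factor: $\|(\sum|x_{i}|^{q})^{1/q}\|_{X}$ in the first case and $(\sum\|x_{i}\|_{X}^{q})^{1/q}$ in the second. If $X$ is $q$‑convex, then by the very definition of the $q$‑convexity constant the first quantity is bounded by $M^{(q)}(X)$ times the second for every choice $\{x_{i}\}$, so $\phi_{p,q}\le M^{(q)}(X)\,\delta_{p,q}$ pointwise; passing to infima over finite sums yields $r_{p,q}\le M^{(q)}(X)\,h_{p,q}$, and the dualization principle above gives (1)(i). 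Symmetrically, $q$‑concavity of $X$ reverses that lattice/sequence norm comparison and produces $h_{p,q}\le M_{(q)}(X)\,r_{p,q}$, hence (1)(ii); and (1)(iii) follows because any $L^{q}$‑space is simultaneously $q$‑convex and $q$‑concave of constant $1$.

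For part (2) the same comparison is applied to the $Y^{*}$‑side, now between $\phi_{p,q}$ and $\iota_{p,q}$. The relevant ingredient is the standard Banach‑lattice duality $Y$ is $p$‑convex (resp.\ $p$‑concave) if and only if $Y^{*}$ is $p'$‑concave (resp.\ $p'$‑convex), with the appropriate control of constants (cf.\ \cite[1.d.4]{LT2}). Thus when $Y$ is $p$‑concave, $Y^{*}$ is $p'$‑convex, so $\|(\sum|y_{i}^{*}|^{p'})^{1/p'}\|_{Y^{*}}\le M^{(p')}(Y^{*})(\sum\|y_{i}^{*}\|_{Y^{*}}^{p'})^{1/p'}$, which yields $\phi_{p,q}\le C\,\iota_{p,q}$ and hence $r_{p,q}\le C\,k_{p,q}$; dualizing gives (2)(i). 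The symmetric argument starting from $Y$ $p$‑convex gives (2)(ii), and (2)(iii) follows since $L^{p}$ is both $p$‑convex and $p$‑concave with constant $1$. Finally, (3) is obtained by combining item (iii) of the earlier unlabeled proposition of Section \ref{prelim} ($X$ $q$‑concave and $Y$ $p$‑convex force $\mathcal R_{p,q}(X,Y)=\mathcal L(X,Y)$) with (1)(iii) and (2)(iii), applied to $X=L^{q}$ and $Y=L^{p}$.

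The only genuinely delicate point is to keep straight the direction in which the comparisons propagate through duality: a pointwise \emph{smaller} tensor norm on $X\otimes Y^{*}$ yields a \emph{smaller} dual, i.e.\ the corresponding operator class contains \emph{fewer} operators; it is therefore $q$‑convexity of $X$ (and not $q$‑concavity) that produces $\mathcal R_{p,q}\subseteq\mathcal{CX}_{p,q}$, and $p$‑concavity of $Y$ (and not $p$‑convexity) that produces $\mathcal R_{p,q}\subseteq\mathcal{CC}_{p,q}$, via the passage to $Y^{*}$ being $p'$‑convex. Once this bookkeeping is in place, each item reduces to an elementary pointwise inequality between a lattice norm and a sequence‑space norm.
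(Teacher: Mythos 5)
Your argument is correct and coincides with what the paper does: for (1) and (2) the paper's proof is exactly the ``direct duality argument'' you spell out, namely pointwise comparison of $\phi_{p,q}$ with $\delta_{p,q}$ (via $q$-convexity/$q$-concavity of $X$) and of $\phi_{p,q}$ with $\iota_{p,q}$ (via the duality between $p$-concavity of $Y$ and $p'$-convexity of $Y^{*}$), followed by dualization through Theorems \ref{reppqreg} and \ref{reppqcom}; you also get the direction of the inclusions right. The only divergence is in (3): the paper proves it by the tensor-norm estimate $\pi(z)\le\big(\sum\|x_i\|^{q}\big)^{1/q}\big(\sum\|y_i^{*}\|^{p'}\big)^{1/p'}$ on $L^{q}\otimes L^{p'}$, which forces $\pi$ to coincide with $r_{p,q}$, $h_{p,q}$ and $k_{p,q}$ there, whereas you instead combine part (iii) of the Section \ref{prelim} proposition (concavity of $L^{q}$ plus convexity of $L^{p}$ give $\mathcal R_{p,q}=\mathcal L$) with (1)(iii) and (2)(iii); the two routes are equivalent in substance, the paper's being the predual form of the operator-level computation you invoke.
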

\begin{proof}
The proofs of (1) and (2) are direct consequences of the previous results and direct duality arguments.
For the proof of (3) just note that $p' \le q'$ and for every representation of a tensor as $z= \sum_{i=1}^n x_i \otimes y_i^*$ we have
\begin{align*}
\pi(z) &\le \sum_{i=1}^n \|x_i\| \, \|y_i^*\| \le \Big( \sum_{i=1}^n \| x_i\|^q \Big)^{1/q} \Big( \sum_{i=1}^n \| y_i^*\|^{q'} \Big)^{1/q'}\\
&\le  \Big( \sum_{i=1}^n \| x_i\|^q \Big)^{1/q} \Big( \sum_{i=1}^n \| y_i^*\|^{p'} \Big)^{1/p'}.
\end{align*}
\end{proof}

Besides this, more can be said about the coincidence of $(p.q)$-regular operators between $L^r$-spaces. This will be addressed in Section \ref{subsecFacBan}.

Some direct consequences of the general theory of summability of Banach lattices can also be stated in this framework. Using the representation theorem for maximal operator ideals (see \cite[p.203]{deflo}) and the tensor norms associated to the ideals of $p$-summing and $p$-dominated operators, we obtain the following:

\begin{proposition}
Let $X, \,Y$ be Banach lattices and $1 \le p \le \infty$. The following relations hold:
\begin{itemize}
\item[(i)] $w_p \le r_{p,p}$ and $ \mathcal D_p(X,Y) \subseteq \mathcal R_{p,p}(X,Y),$ where $\mathcal D_p$ is the ideal of $p$-dominated operators.

\item[(ii)]  $g_p \le h_{p,p}$ and $ \Pi_{p'}^*(X,Y) \subseteq \mathcal{CX}_{p,p}(X,Y),$ where $\Pi_{p'}^*(X,Y)$ is the adjoint to the ideal of $p'$-summing operators.

\item[(iii)]  $d_p \le k_{p',p'}$ and so $\Pi_{p'}(X,Y) \subseteq \mathcal{CC}_{p',p'} (X,Y)$.

\end{itemize}
\end{proposition}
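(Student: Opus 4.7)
The plan is to reduce each item to the key lattice inequality stated in the introduction, namely that for any Banach lattice $Z$, any finite family $\{z_i\}\subset Z$, and any $1\le r\le\infty$,
\[
\sup_{z^{*}\in B_{Z^{*}}}\Bigl(\sum_i|\langle z_i,z^{*}\rangle|^{r}\Bigr)^{1/r}\le \Bigl\|\bigl(\sum_i|z_i|^{r}\bigr)^{1/r}\Bigr\|_{Z}.
\]
Once the three tensor-norm inequalities are established, the operator-ideal inclusions follow from Theorems \ref{reppqreg} and \ref{reppqcom} together with the classical identifications of $\mathcal{D}_{p}$, $\Pi_{p'}^{*}$ and $\Pi_{p'}$ as the trace-dual maximal ideals of the Chevet--Saphar/Laprest\'e norms $w_{p}$, $g_{p}$ and $d_{p}$ respectively (see \cite[p.~203]{deflo}): indeed, a pointwise inequality $\alpha\le\beta$ between reasonable tensor norms forces $(X\otimes_{\alpha}Y^{*})^{*}\subseteq(X\otimes_{\beta}Y^{*})^{*}$ via the trivial implication $|\varphi(z)|\le C\alpha(z)\Rightarrow|\varphi(z)|\le C\beta(z)$.

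For (i), I fix an elementary representation $z=\sum_{i=1}^{n}x_i\otimes y_i$ and apply the key inequality simultaneously to $\{x_i\}\subset X$ with exponent $p$ and to $\{y_i\}\subset Y$ with exponent $p'$. Multiplying the two estimates gives
\[
\sup_{x^{*}\in B_{X^{*}}}\Bigl(\sum|\langle x_i,x^{*}\rangle|^{p}\Bigr)^{1/p}\sup_{y^{*}\in B_{Y^{*}}}\Bigl(\sum|\langle y_i,y^{*}\rangle|^{p'}\Bigr)^{1/p'}\le\Bigl\|\bigl(\sum|x_i|^{p}\bigr)^{1/p}\Bigr\|_{X}\Bigl\|\bigl(\sum|y_i|^{p'}\bigr)^{1/p'}\Bigr\|_{Y},
\]
which is exactly the bound $w_{p}(z)\le\phi_{p,p}(z)$ on that representation. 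Since $w_{p}$ is a norm, hence subadditive, any further decomposition $z=\sum_{j}z_j$ yields $w_{p}(z)\le\sum_{j}w_{p}(z_j)\le\sum_{j}\phi_{p,p}(z_j)$, and taking the infimum gives $w_{p}\le r_{p,p}$. Combining this with the above representation of $\mathcal{D}_{p}$ and Theorem \ref{reppqreg} yields $\mathcal{D}_{p}(X,Y)\subseteq\mathcal{R}_{p,p}(X,Y)$.

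Parts (ii) and (iii) repeat the argument asymmetrically. For (ii), apply the key inequality only on the $Y$-side with exponent $p'$, leaving the factor $(\sum\|x_i\|^{p})^{1/p}$ intact; this gives $g_{p}(z)\le\delta_{p,p}(z)$ on each representation, and subadditivity of $g_{p}$ promotes this to $g_{p}\le h_{p,p}$, after which Theorem \ref{reppqcom}(1) delivers $\Pi_{p'}^{*}(X,Y)\subseteq\mathcal{CX}_{p,p}(X,Y)$. For (iii), apply the key inequality only on the $X$-side with exponent $p'$ (noting that $(p')'=p$, so the $y$-factor of $\iota_{p',p'}$ is indeed $(\sum\|y_i\|^{p})^{1/p}$) to obtain $d_{p}(z)\le\iota_{p',p'}(z)$, upgrade to $d_{p}\le k_{p',p'}$ by subadditivity, and conclude via Theorem \ref{reppqcom}(2) that $\Pi_{p'}(X,Y)\subseteq\mathcal{CC}_{p',p'}(X,Y)$.

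No serious mathematical obstacle is expected; the work is essentially bookkeeping. The delicate points are (a) pairing the primed and unprimed indices correctly between the tensor-norm side ($w_{p}$, $g_{p}$, $d_{p}$) and the classical-ideal side ($\mathcal{D}_{p}$, $\Pi_{p'}^{*}$, $\Pi_{p'}$), and (b) exploiting the fact that $w_{p}$, $g_{p}$ and $d_{p}$ are genuine norms (not merely quasi-norms) so that subadditivity is available to pass from the elementary, single-representation estimates to the global estimates $w_{p}\le r_{p,p}$, $g_{p}\le h_{p,p}$, $d_{p}\le k_{p',p'}$ that feed into the representation theorems.
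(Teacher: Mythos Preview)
Your proposal is correct and follows essentially the same approach as the paper: establish the tensor-norm inequalities $w_p\le r_{p,p}$, $g_p\le h_{p,p}$, $d_p\le k_{p',p'}$ via the key lattice inequality, and then pass to the operator-ideal inclusions through the trace-duality representations (Theorems~\ref{reppqreg} and~\ref{reppqcom} together with the classical identifications from \cite{deflo}). If anything, your write-up is slightly more explicit than the paper's, which routes (i) through the relabeling $p\leftrightarrow p'$ and the identities $w_p=w_{p'}^t$, $(w_p^*)'=w_{p'}$ before invoking the same key inequality; you also make the subadditivity step (from $\phi_{p,p}$, $\delta_{p,p}$, $\iota_{p',p'}$ to $r_{p,p}$, $h_{p,p}$, $k_{p',p'}$) explicit, whereas the paper leaves it implicit.
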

\begin{proof}
For (i), use $w_p=w_{p'}^t$ (\cite[p.152]{deflo}) and the fact that the ideal of $p'$-dominated operators is associated to the norm $w_p^*$. Then $(w^*_p)'= w_{p'}$. A direct calculation using that for $x_1,\ldots,x_n \in X$,
$$
\sup_{x^* \in B_{X^*}} \Big( \sum |\langle x_i, x^* \rangle|^p \Big)^{1/p} \le \Big\| \Big( \sum | x_i|^p \Big)^{1/p} \Big\|,
$$
 gives $w_{p'} \le r_{p',p'}.$ Thus the above comments give the required inclusion.

For (ii), use that $\Pi^*_{p'}$ is associated to $g_p'$ (\cite[p.211]{deflo} and \cite[\S.17.9]{deflo}). The inequality $g_p \le h_{p,p}$ is given by using the same inequality as in (i). Similar arguments
show (iii).
\end{proof}

In the particular case when we deal with $L_p$ spaces, the so called Chevet-Persson-Saphar inequalities (see \cite[15.10]{deflo}), provide a useful tool for relating the tensor norms we just introduced with other classical tensor norms:
$$
d^*_{p'}(z) \le d_p(z) \le \Delta_p(z) \le g^*_{p'}(z) \le g_p(z), \quad z \in L^p \otimes Y.
$$
Moreover, in the case when $Y$ is also an $L^p$-space, we actually get
\begin{equation} \label{chepersaeq}
d^*_{p'}(z) = d_p(z) = \Delta_p(z) = g^*_{p'}(z) = g_p(z), \quad z \in L^p(\mu) \otimes L^p(\nu)
\end{equation}
for arbitrary measures $\mu$ and $\nu$.

Let us now focus on the topological properties of the tensor product $L^p(\mu) \otimes L^{p'}(\nu)$ endowed with the $r_{p,p}$-norm, and compare them with other classical topologies. We will comment on $p=2$ and the general case separately:

\begin{itemize}

\item[(1)] For $p=2$, the equalities given in (\ref{chepersaeq}) show that $L^2(\mu) \otimes_\alpha L^{2}(\nu)$ cannot be isomorphic to $L^2(\mu) \otimes_{r_{2,2}} L^{2}(\nu)$ for $\alpha=d^*_{2}= d_2= \Delta_2 = g^*_{2} = g_2$, since by Corollary \ref{GroTh}, we have that $r_{2,2}$ is equivalent to $\pi$ in this tensor product.

\item[(2)] However, we can easily see that $r_{2,2}$ is equivalent to $d_2$ on the tensor product $X \otimes \ell^2$ for $X= \ell^\infty$ or $X= \ell^1$. This is a direct consequence of the so called Little Grothendieck Theorem (see \cite[17.14]{deflo}) and Corollary \ref{GroTh}.

\item[(3)] Similarly, $d_\infty$ is also equivalent to $r_{2,2}$, in this case as a consequence of Grothendieck Theorem (see \cite[17.14]{deflo}) and Corollary \ref{GroTh}.

\end{itemize}

For the general case $(p \ne 2)$, the Chevet-Persson-Saphar inequalities yield some positive results about $(s,q)$-regularity for certain well-known operators. The following are just a sample.

\begin{itemize}
\item[(1)] For every Banach lattice $Y$, $d^*_{p'} \le d_p \le \Delta_p \le g^*_{p'} \le g_p \le r_{s,q}$ for every $1 \le q \le p \le s$ in the tensor product $L^p(\mu) \otimes Y$. This is a direct consequence of the $p$-concavity of $L^p$.

\item[(2)]  The norm $\Delta_p$  concerns spaces of Bochner integrable functions, and can also be related to operators $T:L^p(\mu) \to Y$ defined as $Y$-valued integral by means of the formula
$f \mapsto \int \phi \, f \, d\mu \in Y$ for a certain function $\phi \in L^{p'}(\mu,Y) \hookrightarrow (L^p(\mu) \otimes_{\Delta_p} Y^*)^*$. Thus, the comparison of $\Delta_p$ and $r_{s,q}$ provides also some meaningful results.
\end{itemize}

Using the tensor product representation of maximal operator ideals as dual spaces of topological tensor products (see \cite[17.5]{deflo}), the above arguments yield the following.

\begin{corollary}
Let  $Y$ be a Banach lattice, $1 \le q \le r \le p$ and $T:L^p(\mu) \to Y$.
\begin{itemize}

\item[(1)] Every $r$-integral and $r$-summing operator from $L^r(\mu)$ to $Y$, as well as operators belonging to their associated dual ideals, are $(p,q)$-regular.

\item[(2)]  Every operator $T:L^r(\mu) \to Y$ defined as the $Y$-valued integral $T(\cdot)= \int \phi \, (\cdot) \, d\mu$ for a function $\phi \in L^{r'}(\mu,Y)$ is $(p,q)$-regular.
\end{itemize}
\end{corollary}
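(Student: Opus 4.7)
The plan is to combine the tensor representation of $(p,q)$-regular operators given in Theorem~\ref{reppqreg} with the Chevet--Persson--Saphar domination chain and the identification of maximal operator ideals as duals of tensor norms (see \cite[17.5]{deflo}). The key input is the inequality
\[
d^*_{r'} \;\le\; d_r \;\le\; \Delta_r \;\le\; g^*_{r'} \;\le\; g_r \;\le\; r_{p,q}
\]
on $L^r(\mu) \otimes Y^*$ whenever $1 \le q \le r \le p$, which was established in the discussion preceding the corollary as a consequence of the $r$-concavity of $L^r(\mu)$.

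For (1), the starting point is Theorem~\ref{reppqreg}, which gives the isometric identity
\[
\mathcal R_{p,q}(L^r(\mu),Y) \;=\; \bigl(L^r(\mu) \otimes_{r_{p,q}} Y^*\bigr)^* \cap \mathcal L(L^r(\mu),Y).
\]
If $\alpha \le r_{p,q}$ as tensor norms, then the formal identity extends to a continuous map from the $r_{p,q}$-completion into the $\alpha$-completion, and passing to duals one gets
\[
\bigl(L^r(\mu) \otimes_{\alpha} Y^*\bigr)^* \;\hookrightarrow\; \bigl(L^r(\mu) \otimes_{r_{p,q}} Y^*\bigr)^*.
\]
Applying this with $\alpha \in \{g_r,\, g^*_{r'},\, d_r,\, d^*_{r'}\}$, the standard dictionary of \cite[17.5]{deflo} identifies the four left-hand sides with the maximal hulls of $\Pi_r$, $\Pi_r^*$, $\mathcal I_r$ and $\mathcal I_r^*$, respectively. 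Intersecting with $\mathcal L(L^r(\mu),Y)$ then yields the inclusions of each of these ideals into $\mathcal R_{p,q}(L^r(\mu),Y)$, which is exactly the claim in (1).

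For (2), I would invoke the embedding $L^{r'}(\mu,Y) \hookrightarrow (L^r(\mu) \otimes_{\Delta_r} Y^*)^*$ recorded in item (2) of the discussion preceding the corollary: a function $\phi \in L^{r'}(\mu,Y)$ acts on elementary tensors by $f \otimes y^* \mapsto \int \langle \phi f, y^*\rangle\, d\mu$, and the operator $T_\phi: f \mapsto \int \phi f\, d\mu$ is exactly the one induced by this functional through trace duality. Since $\Delta_r \le r_{p,q}$, the same functional extends continuously to $L^r(\mu) \otimes_{r_{p,q}} Y^*$, and Theorem~\ref{reppqreg} then gives the $(p,q)$-regularity of $T_\phi$.

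The main obstacle is bookkeeping rather than substance: one has to keep track of which Chevet--Persson--Saphar norm pairs with which maximal ideal, and in particular which of the pairings produces the adjoint ideal (i.e.\ the correspondence $g_r \leftrightarrow \Pi_r$ versus $g^*_{r'} \leftrightarrow \Pi_r^*$, and likewise for $d_r$ and $d^*_{r'}$). Once this dictionary is fixed, every assertion in the corollary reduces to the elementary principle that a smaller tensor norm has a larger dual space.
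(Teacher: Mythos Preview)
Your proposal is correct and follows precisely the route the paper itself sketches: the paper's ``proof'' consists only of the sentence introducing the corollary, which points to the representation of maximal ideals as duals of tensor products \cite[17.5]{deflo} together with the Chevet--Persson--Saphar chain $d^*_{r'} \le d_r \le \Delta_r \le g^*_{r'} \le g_r \le r_{p,q}$ on $L^r(\mu)\otimes Y^*$ established just above. Your write-up makes the logic explicit (smaller tensor norm $\Rightarrow$ larger dual $\Rightarrow$ inclusion of the corresponding ideal into $\mathcal R_{p,q}$ via Theorem~\ref{reppqreg}), and your honest caveat about the exact pairing between $\{g_r, g^*_{r'}, d_r, d^*_{r'}\}$ and $\{\Pi_r, \Pi_r^*, \mathcal I_r, \mathcal I_r^*\}$ is appropriate --- since all four norms sit below $r_{p,q}$, the conclusion holds regardless of which norm matches which ideal, so the bookkeeping genuinely is inessential to the argument.
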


Let us recall that an equivalent form of Grothendieck's Theorem can be given in terms of tensor products of $C(K)$-spaces \cite[Theorem 3.1]{pisgro}: for each pair of
compact Hausdorff spaces $K_1$ and $K_2$ and every $z = \sum_{i=1}^n x_i \otimes y_i \in C(K_1) \otimes C(K_2)$, it holds that
$$
\pi(z) \le \, K_G \, \| (\sum_{i=1}^n |x_i|^2)^{1/2} \|_{C(K_1)} \cdot  \| (\sum_{i=1}^n |y_i|^2)^{1/2} \|_{C(K_2)},
$$
where $K_G$ is Grothendieck's constant.

Krivine's version of Grothendieck's Theorem (cf. \cite[Theorem 1.f.14]{LT2}) states that $\mathcal R_{2,2}(X,Y) = \mathcal L(X,Y)$, and the corresponding constants are related as $\|T\| \le \rho_{2,2}(T) \le K_G \|T\|.$ The following result is the pre-dual version of this fact, and so is also equivalent to Grothendieck's Theorem.

\begin{corollary} \label{GroTh}
Let $X$ and $Y$ be Banach lattices. Then $\pi \le K_G \, r_{2,2} \le K_G \, \pi$ on $X \otimes Y$.
\end{corollary}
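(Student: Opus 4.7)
The plan is to reduce the statement to Krivine's form of Grothendieck's inequality recalled just above, via the natural trace duality $(X \otimes_\pi Y)^* = \mathcal{L}(X, Y^*)$. The right-hand inequality $r_{2,2} \le \pi$ is already contained in Proposition \ref{proppqreg}(1), so the whole content is the left-hand inequality $\pi \le K_G\, r_{2,2}$.

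Fix $z \in X \otimes Y$. Using the triangle inequality for $\pi$ together with the definition of $r_{2,2}$ as an infimum over decompositions $z = \sum_j z_j$, it suffices to prove the pointwise estimate
\[
\pi(z) \;\le\; K_G\, \|(\textstyle\sum_i |x_i|^2)^{1/2}\|_X \cdot \|(\textstyle\sum_i |y_i|^2)^{1/2}\|_Y
\]
for every single representation $z = \sum_{i=1}^n x_i \otimes y_i$; then taking the infimum over representations yields $\pi(z) \le K_G \phi_{2,2}(z)$, and summing over a near-optimal decomposition yields $\pi(z) \le K_G r_{2,2}(z)$.

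To prove the displayed bound, I would express $\pi(z)$ through trace duality: $\pi(z) = \sup\{|\langle T,z\rangle| : T \in \mathcal{L}(X,Y^*),\, \|T\|\le 1\}$, where $\langle T,z\rangle = \sum_i \langle Tx_i,y_i\rangle$. Since $Y^*$ is itself a Banach lattice, Krivine's theorem as stated just above gives $\rho_{2,2}(T) \le K_G\|T\| \le K_G$ for every such $T$. Combining this with the generalized H\"older inequality \eqref{eq:genholder} in $Y$ (applied with $x'_i = Tx_i \in Y^*$, $x_i = y_i \in Y$, and exponents $r=1$, $p=s=2$) gives
\[
\sum_{i=1}^n |\langle Tx_i,y_i\rangle| \;\le\; \|(\textstyle\sum |Tx_i|^2)^{1/2}\|_{Y^*}\, \|(\textstyle\sum |y_i|^2)^{1/2}\|_Y \;\le\; K_G \|(\textstyle\sum |x_i|^2)^{1/2}\|_X\, \|(\textstyle\sum |y_i|^2)^{1/2}\|_Y,
\]
and taking the supremum over $T$ yields the desired bound.

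There is essentially no obstacle, since everything needed has been established earlier: the representation of $\pi$ via bilinear forms / operators into $Y^*$ is classical, Krivine's theorem is available for $Y^*$ as a Banach lattice, and the lattice H\"older inequality is recorded in \eqref{eq:genholder}. The only minor point to be careful about is that Krivine's theorem must be applied to operators with values in the dual lattice $Y^*$ rather than in $Y$ itself, which is harmless since duals of Banach lattices are Banach lattices.
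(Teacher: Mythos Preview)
Your proof is correct, but it takes a genuinely different route from the paper's.

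The paper argues by \emph{localization}: given a representation $z=\sum_i x_i\otimes y_i$, it forms $x_0=(\sum|x_i|^2)^{1/2}$ and $y_0=(\sum|y_i|^2)^{1/2}$, passes to the principal ideals $I(x_0)\subset X$ and $I(y_0)\subset Y$ equipped with their AM-norms, identifies these with $C(K)$ spaces via Kakutani's theorem, and then applies the $C(K)\otimes C(K)$ form of Grothendieck's theorem together with the metric mapping property of $\pi$ under the contractive inclusions $I(x_0)\hookrightarrow X$, $I(y_0)\hookrightarrow Y$.

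Your argument instead uses trace duality $(X\otimes_\pi Y)^*=\mathcal{L}(X,Y^*)$ and invokes Krivine's theorem directly for operators into the Banach lattice $Y^*$, followed by the lattice H\"older inequality~\eqref{eq:genholder}. This is shorter and perhaps more in the spirit of the sentence preceding the corollary (``the pre-dual version of this fact''): you are literally dualizing Krivine's statement. The paper's approach, by contrast, is more self-contained in that it goes back to the primitive $C(K)$ formulation of Grothendieck rather than assuming Krivine's lattice version, and it showcases the AM-localization trick, which is of independent interest. Both routes are standard; yours treats Krivine as a black box while the paper effectively re-derives it in tensor-product form.
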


\begin{proof}
Take a tensor $z=\sum_{i=1}^n x_i \otimes y_i \in X \otimes Y,$ and consider $x_0 = (\sum_{i=1}^n |x_i|^2)^{1/2} \in X$ and $y_0 =(\sum_{i=1}^n |y_i|^2)^{1/2} \in Y$. Take the ideals $I(x_0)$ and $I(y_0)$ generated
by these elements in the corresponding lattices, and endow both of them with $AM$-norms:
$$
\|x\|_{X,\infty}= \inf \Big\{ \lambda: \, |x| \le \lambda \frac{x_0}{\|x_0\|} \Big\} \,\,\,\,\,\, \textrm{and} \,\,\,\,\,\,
\|y\|_{Y,\infty}= \inf \Big\{ \lambda: \, |y| \le \lambda \frac{y_0}{\|y_0\|} \Big\}.
$$
Note that $\|x\|_X \le  \|x\|_{X,\infty}$ and $\|y\|_Y \le \|y\|_{Y,\infty}$, and so the inclusion maps $J_X: \overline I(x_0) \to X$ and $J_Y: \overline I(y_0) \to Y$ acting in the closure of these ideals satisfy that
$\|J_X\| \le 1$ and $\|J_Y\| \le 1,$ respectively. By Kakutani's theorem these can be considered as $C(K)$ spaces (cf. \cite[Theorem 1.b.6]{LT2}. Note also that
$$
\|x_0\|_{X,\infty} = \|x_0\|_{X} = \| (\sum_{i=1}^n |x_i|^2 )^{1/2}\|_X \,\,\,\, \textrm{and} \,\,\,\,\,\, \|y_0\|_{Y,\infty} = \|y_0\|_{Y} = \| (\sum_{i=1}^n |y_i|^2 )^{1/2}\|_Y.
$$

Since $\pi$ satisfies the metric mapping property (see for example \cite[3.2]{deflo}), we have that
$$
\big\| J_X \otimes J_Y: \overline I(x_0) \otimes_\pi  \overline I(y_0) \to X \otimes_\pi Y \big\| \le \|J_X\| \cdot \|J_Y\| \le 1.
$$

Now, we apply Grothendieck's Theorem for tensor products of $C(K)$ spaces to obtain
\begin{align*}
\pi(\sum_{i=1}^n x_i \otimes y_i)&= \pi(\sum_{i=1}^n J_X(x_i) \otimes J_Y(y_i) ) \le K_G \, \big\| (\sum_{i=1}^n |x_i|^2)^{1/2} \big\|_{X,\infty} \cdot \big\| ( \sum_{i=1}^n |y_i|^2)^{1/2} \big\|_{Y,\infty}\\
&= K_G \, \big\| (\sum_{i=1}^n |x_i|^2)^{1/2} \big\|_{X} \cdot \big\| ( \sum_{i=1}^n |y_i|^2)^{1/2} \big\|_{Y}.
\end{align*}
The result follows by convexity.
\end{proof}

\section{$(p,q)$-regular operators between $L_r$-spaces and Marcinkiewicz-Zygmund inequalities} \label{subsecFacBan}

In this section we will center our attention in the case of operators defined between $L_r$-spaces, in relation with the Marcinkiewicz-Zygmund type inequalities presented by A. Defant and M. Junge  in \cite{DJ}. By means of the so called Maurey-Rosenthal factorization theory (see for instance \cite{defa,defasan}), we will be able to extend these results to the case of operators acting in $r$-convex function lattices and with values in $r$-concave Banach function lattices. In particular, we study the requirements to reduce the study of $(p,q)$-regular operators between Banach lattices to the properties of such operators between $L_r$-spaces.

For the case $p=q$, a Maurey-Rosenthal factorization theorem for  $p$-regular operators holds under the usual convexity/concavity requirements. The following result is similar to Theorem 3.1 in \cite{defasan}. However, the reader must notice that the requirements on the operator $T$ are different. We sketch the proof showing the argument; see the proof of Theorem 3.2 in \cite{defasan} for more details.

\begin{theorem} \label{thA}
Let $1 \le s \le p <\infty$.
Let $X$ be a $p$-convex order continuous Banach function space over $(\Omega,\Sigma,\mu)$ and  $Y$ be an $s$-concave Banach function space over $(\Omega',\Sigma',\nu)$ with $Y'$ order continuous. Let $T:X \to Y$ be a $p$-regular operator. Then
$T$ factors as
$$
\xymatrix{
X \ar[rr]^{T} \ar@{.>}[d]_{M_f} & &   Y\\
L_{p}(\mu)  \ar@{.>}[rr]^{\hat T}&  & L_{s}(\nu)
\ar@{.>}[u]_{M_g}}
$$
for suitable functions $f$ and $g$. Here, $\hat T$ is a linear and continuous operator.
\end{theorem}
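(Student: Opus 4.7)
The plan is to follow the classical Maurey--Rosenthal scheme via a Ky Fan minimax argument, as the authors indicate by referring to \cite[Theorem 3.2]{defasan}. First, I would reduce the claimed factorization to a single weighted integral inequality: producing a.e.-positive weights $f$ on $\Omega$ and $g$ on $\Omega'$, together with a constant $C>0$, such that
\begin{equation*}
\Bigl(\int_{\Omega'} |Tx|^{s} g^{-s}\, d\nu\Bigr)^{1/s} \le C \Bigl(\int_\Omega |x|^{p} f^{p}\, d\mu\Bigr)^{1/p}, \quad x \in X,
\end{equation*}
with $f \in \mathcal{M}(X, L_p(\mu))$ and $g \in \mathcal{M}(L_s(\nu), Y)$ (the relevant multiplier spaces), is equivalent to the factorization: setting $M_f(x) := fx$, $M_g(h) := gh$ and $\hat T(fx) := Tx/g$ on $fX \subseteq L_p(\mu)$ (extended by density) yields the desired commutative diagram.

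The next step is to produce such weights by dualizing both sides of the $p$-regularity estimate. After renorming so that $M^{(p)}(X) = 1$, the $p$-convexity of $X$ makes the $p$-th power space $X_{[p]}$ a Banach function space, and the order continuity of $X$ ensures that its Köthe dual $(X_{[p]})'$ norms $X_{[p]}$. Hahn--Banach then gives
\begin{equation*}
\Bigl\|\Bigl(\sum_i |x_i|^{p}\Bigr)^{1/p}\Bigr\|_X = \sup_{\phi}\Bigl(\sum_i \int |x_i|^{p} \phi\, d\mu\Bigr)^{1/p}
\end{equation*}
with $\phi$ ranging over the positive part of the unit ball of $(X_{[p]})'$. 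Symmetrically, the $s$-concavity of $Y$ together with the order continuity of $Y'$ provides a norming set $\Psi$ of positive weights $\psi$ on $\Omega'$ (essentially the positive part of the unit ball of a pre-dual of $\mathcal{M}(L_s(\nu), Y)$) for which the weighted $L_s$-sum $(\sum_i \int |Tx_i|^{s}\psi\, d\nu)^{1/s}$ is controlled by the $Y$-norm of $(\sum_i |Tx_i|^{p})^{1/p}$; here the hypothesis $s \le p$ is used.

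With these two dual representations in hand, let $K$ be the product of the two weight sets above, which is convex and weak-$*$ compact. For each finite family $(x_i)_{i=1}^n \subset X$, the function
\begin{equation*}
\Phi_{(x_i)}(\phi,\psi) := \Bigl(\sum_i \int |Tx_i|^{s}\psi\, d\nu\Bigr)^{1/s} - \rho_{p,p}(T) \Bigl(\sum_i \int |x_i|^{p}\phi\, d\mu\Bigr)^{1/p}
\end{equation*}
is concave and upper semicontinuous on $K$. Combining the $p$-regularity of $T$ with the two dual descriptions of the norms shows that $\min_{K} \Phi_{(x_i)} \le 0$ for every finite family. Since the family $\{\Phi_{(x_i)}\}$ is closed under passing to joint finite families, Ky Fan's minimax lemma produces a single pair $(\phi_0, \psi_0) \in K$ satisfying $\Phi_{(x_i)}(\phi_0, \psi_0) \le 0$ uniformly. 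Specializing to single elements and setting $f := \phi_0^{1/p}$, $g := \psi_0^{-1/s}$ delivers the desired weighted inequality, and hence the factorization.

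The main obstacle is producing the $Y$-side dual representation with the right norming/compactness properties so that Ky Fan's lemma actually delivers $\min_K \Phi_{(x_i)} \le 0$ with matching constants. On the $X$-side the $p$-convexity makes the dualization essentially automatic, but on the $Y$-side one must combine the $s$-concavity, the inequality $s \le p$, and the order continuity of $Y'$ simultaneously: the concavity is needed so that $\mathcal{M}(L_s(\nu), Y)$ is sufficiently large, the order continuity of $Y'$ is needed to keep the weight set weak-$*$ compact, and the constraint $s \le p$ is needed to align the two sides of the estimate after the dualization.
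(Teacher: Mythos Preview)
Your overall plan---Ky Fan minimax on a product of weak-$*$ compact weight sets, followed by specialization to a single vector---is the right one, and matches the paper in spirit. However, the function you actually write down cannot be fed into Ky Fan's lemma. For fixed $(x_i)$,
\[
\Phi_{(x_i)}(\phi,\psi)=\Bigl(\sum_i\int |Tx_i|^{s}\psi\,d\nu\Bigr)^{1/s}-\rho_{p,p}(T)\Bigl(\sum_i\int |x_i|^{p}\phi\,d\mu\Bigr)^{1/p}
\]
is concave in $\psi$ (a $1/s$-th power of a linear functional) but \emph{convex} in $\phi$ (minus a $1/p$-th power of a linear functional). So $\Phi_{(x_i)}$ is neither convex nor concave on $K$, and the version of Ky Fan you invoke (producing a single $(\phi_0,\psi_0)$ with $\Phi_{(x_i)}(\phi_0,\psi_0)\le 0$ for every finite family) requires convexity of each $\Phi_{(x_i)}$ on $K$. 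Your sentence ``concave and upper semicontinuous on $K$'' is therefore incorrect, and even if it were concave, the conclusion $\min_K\Phi_{(x_i)}\le 0$ would not combine via Ky Fan to a common point. Related to this, your $Y$-side dual description is too vague: with $s$-concavity you do not get a norming weight set $\Psi$ for which $\bigl(\int |y|^{s}\psi\,d\nu\bigr)^{1/s}$ sits \emph{below} $\|y\|_Y$ uniformly in the way you need without essentially assuming the factorization you are trying to prove.

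The paper resolves exactly this convexity obstruction by moving the $Y$-side variable from $K$ into the \emph{index} of the family. One pairs each $Tx_i$ with a test element $y'_i\in Y'$, uses the generalized H\"older inequality \eqref{eq:genholder} together with $p$-regularity to obtain
\[
\Bigl(\sum_i\|Tx_i\,y'_i\|_{L_1(\nu)}^{r}\Bigr)^{1/r}\le C\,\Bigl\|\bigl(\sum_i|x_i|^{p}\bigr)^{1/p}\Bigr\|_X\,\Bigl\|\bigl(\sum_i|y'_i|^{s'}\bigr)^{1/s'}\Bigr\|_{Y'},
\]
with $1/r=1/p+1/s'$, and then applies Young's inequality to the product on the right so that the resulting Ky Fan functions are \emph{affine} in the dual variables $(f_0,g_0)\in B_{(X_{[p]})'}\times B_{((Y')_{[s']})'}$. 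Affinity gives both the convexity on $K$ and the concavity of the family for free, and the separation argument then produces $f_0$, $g_0$ satisfying
\[
\int|Tx\,y'|\,d\nu\le C\Bigl(\int|x|^{p}f_0\,d\mu\Bigr)^{1/p}\Bigl(\int|y'|^{s'}g_0\,d\nu\Bigr)^{1/s'},
\]
from which the weighted $L_p$--$L_s$ inequality and the factorization follow by one more duality. In short: replace your single weight $\psi$ by finite families $(y'_i)\subset Y'$ (this is where the $s'$-convexity of $Y'$, i.e.\ the $s$-concavity of $Y$, is actually used), and linearize via Young before invoking Ky Fan.
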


\begin{proof} Note first that $1/p+1/s' \le 1$. Take $r\geq 1$ such that $1/r=1/p+1/s'$.
For the aim of simplicity we assume that the $p$-convexity and $s$-concavity constants involved are equal to $1$; note that, being $s$-concave, $Y$ is order continuous. For $\{x_i\}_{i=1}^n\subset  X$ and $\{y'_i\}_{i=1}^n\subset Y'$, using the generalized H\"older inequality \eqref{eq:genholder} and the fact that $T$ is $p$-regular, it holds that
\begin{align*}
\Big( \sum_{i=1}^n \|Tx_i y'_i \|_{L_1(\nu)}^r \Big)^\frac 1r &\le \int \Big( \sum_{i=1}^n |Tx_i |^p \Big)^\frac 1p  \Big( \sum_{i=1}^n | y'_i |^{s'} \Big)^\frac{1}{s'} \, d \nu\\
& \le C \big\| \big( \sum_{i=1}^n |x_i|^p \big)^\frac 1p \big\|_{X}\big\| \big( \sum_{i=1}^n | y'_i|^{s'} \big)^\frac {1}{s'} \big\|_{Y'}.
\end{align*}
Since $X$ is $p$-convex and $Y'$ is $s'$-convex, by Young's inequality, it follows that
\begin{align*}
\sum_{i=1}^n \|Tx_i y'_i \|_{L_1(\nu)}^r &\le C^{r} \big\| \sum_{i=1}^n |x_i|^p  \big\|_{X_{[p]}}^{\frac rp} \big\|  \sum_{i=1}^n | y'_i|^{s'}  \big\|^{\frac{r}{s'}}_{(Y')_{[s']}}\\
&\le C^{r} \Big( \frac{r}{p} \big\| \sum_{i=1}^n |x_i|^p  \big\|_{X_{[p]}} + \frac{r}{s'}\big\|  \sum_{i=1}^n | y'_i|^{s'}  \big\|_{(Y')_{[s']}} \Big).
\end{align*}

A  standard application of Ky Fan's Lemma provides functions $f_0 \in (X(\mu)_{[p]})'=\mathcal M(X(\mu),L^p(\mu))_{[p]}$ (where $\mathcal M$ denotes the space of multiplication operators),  and
$g_0 \in \big( (Y(\nu)')_{[s']} \big)'=\mathcal M(Y(\nu)',L^{s'}(\nu))_{[s']}$ such that
$$
\int |Tx \, y' | d \nu \le C \Big( \int |x|^p f_0 \, d \mu \Big)^{1/p} \, \Big( \int |y'|^{s'} g_0 \, d \nu \Big)^{1/{s'}}.
$$
This gives the inequality
$$
\Big( \int \Big|\frac{Tx}{g_0^{1/s'}} \Big|^{s} d \nu \Big)^{1/s} \le C \Big( \int |x|^p f_0 \, d \mu \Big)^{1/p}
$$
for all $x \in X(\mu)$. This provides the desired factorization, using the associated multiplication operators and $T$ to define the operator $\hat T$ (see the argument given in \cite[Th.3.2]{defasan}).  The decomposition of the operator is given by
$$
M_{g_0^{1/s'}} \circ g_0^{-1/s'}T(\cdot/f_0^{1/p}) \circ M_{f_0^{1/p}} =T,
$$
that is, $f=f_0^{1/p}$, $\hat T= g_0^{-1/s'}T(\cdot/f_0^{1/p})$ and $g=g_0^{1/s'}$.
\end{proof}

\begin{remark}
The statement of the previous result excludes the fundamental case in which $Y=L_1$, since the dual of such space is not order continuous. It must be mentioned here that this was one of the most relevant instances of the original factorization of Maurey and provides some of its main applications, for example regarding the structure of reflexive subspaces of $L^1$-spaces (see II.H.13 in \cite{wojt}). We will not consider this case for the aim of simplicity. However, the result is expected to be true also in this case, since the separation argument can be extended for this case using a nowadays well-known procedure that is explained in \cite{defa}.
\end{remark}

\begin{remark}
The result above assures that the operator factors through an operator $\hat T:L_p(\mu) \to L_s(\nu)$ for $1 < s \le p < \infty$. However, as can be seen in \cite[Corollary on page 282]{DJ}, not every operator from $L_p$ to $L_s$ is $p$-regular.
In fact, regarding operators between $L_p$-spaces, we trivially have the following: Every operator $T:L_r(\mu) \to L_t(\nu)$, for $1 \le r \le t \le \infty$, is $(t,r)$-regular.

Indeed, let $\{x_i\}_{i=1}^n\subset  L_r(\mu)$.
\begin{align*}
\Big\| \big( \sum_{i=1}^n |Tx_i|^t \big)^{1/t} \Big\|_{L_t(\nu)}& =  \big( \sum_{i=1}^n \big\|Tx_i \big\|_{L_t(\nu)}^t \big)^{1/t} \le \|T\| \big( \sum_{i=1}^n \big\|x_i \big\|_{L_r(\nu)}^t \big)^{1/t}\\
&\le \|T\| \big( \sum_{i=1}^n \big\|x_i \big\|_{L_r(\nu)}^r \big)^{1/r} =  \|T\| \Big\| \big( \sum_{i=1}^n |x_i|^r \big)^{1/r} \Big\|_{L_r(\nu)}.
\end{align*}
In the rest of this section we will analyze what happens when $r=t$ ($s=p$ above).
\end{remark}

Theorem \ref{thA} does not give a priori any information on the regularity properties of $\hat{T}$; of course, $T$ need not be positive, otherwise the results regarding this property are trivial.
Next result characterizes the existence of a factorization for $(p,q)$-regular operators through $L_r$-spaces preserving the property of being $(p,q)$-regular. The requirements are formally more restrictive than the ones needed in Theorem \ref{thA}, and they involve a new vector  norm inequality that suggests some mixed norms for Banach function spaces.

\begin{theorem}\label{t:factor2}
Let $X$ be an $r$-convex Banach function space over $(\Omega,\Sigma,\mu)$, and let $Y$ be an $r$-concave Banach function space over $(\Omega',\Sigma',\nu)$ such that $X$ and $Y'$ are order continuous. Let $T:X \to Y$ be an operator. The following are equivalent.

\begin{itemize}

\item[(i)] There is a constant $K>0$ such that for each pair of matrices of elements $(x_{i,j})_{i=1,j=1}^{n,m}$ and $(y'_{i,j})_{i=1,j=1}^{n,m}$ in $X$ and $Y'$, respectively, the following inequality holds.
$$
\sum_{i=1}^n \sum_{j=1}^m \big|\big\langle Tx_{i,j} \, y'_{i,j} \big\rangle\big| \le K \,\Big\|  \big( \sum_{i=1}^n \big( \sum_{j=1}^m |x_{i,j}|^q \big)^\frac{r}{q} \big)^\frac1r \Big\|_X
\,\Big\|  \big( \sum_{i=1}^n \big( \sum_{j=1}^m |y'_{i,j}|^{p'} \big)^\frac{r'}{p'} \big)^\frac{1}{r'} \Big\|_{Y'}
$$

\item[(ii)] There is a constant $K>0$ such that for each matrix  of elements $(x_{i,j})_{i=1,j=1}^{n,m}$ in $X$, respectively, the following inequality holds.
$$
\Big\|  \big( \sum_{i=1}^n \big( \sum_{j=1}^m |T x_{i,j}|^p \big)^\frac{r}{p} \big)^\frac1r \Big\|_Y
\le K  \Big\| \big( \sum_{i=1}^n \big( \sum_{j=1}^m |x_{i,j}|^q \big)^\frac{r}{q} \big)^\frac1r \Big\|_X
$$

\item[(iii)]  There are functions $f$ and $g$ such that $T$ factors as

$$
\xymatrix{
X \ar[rr]^{T} \ar@{.>}[d]_{M_f} & &   Y\\
L_r(\mu)  \ar@{.>}[rr]^{\hat T}&  & L_r(\nu)
\ar@{.>}[u]_{M_g}}
$$

where $\hat T$ is $(p,q)$-regular.
\end{itemize}

\end{theorem}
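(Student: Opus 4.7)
The plan is to prove $(\mathrm{iii})\Rightarrow(\mathrm{ii})\Leftrightarrow(\mathrm{i})\Rightarrow(\mathrm{iii})$, with the last implication being the main work via a Maurey--Rosenthal argument in the spirit of Theorem \ref{thA}. For $(\mathrm{iii})\Rightarrow(\mathrm{ii})$, starting from $T=M_g\hat T M_f$, pointwise $\bigl(\sum_i(\sum_j|Tx_{i,j}|^p)^{r/p}\bigr)^{1/r}=|g|\cdot\bigl(\sum_i(\sum_j|\hat T(fx_{i,j})|^p)^{r/p}\bigr)^{1/r}$. Taking $\|\cdot\|_Y$, absorbing $\|M_g\|_{L_r(\nu)\to Y}$, expanding the remaining $L_r(\nu)$-norm via $\|(\sum_i u_i^r)^{1/r}\|_{L_r(\nu)}^r=\sum_i\|u_i\|_{L_r(\nu)}^r$, applying the $(p,q)$-regularity of $\hat T$ to each slice $(fx_{i,j})_j$, and finally absorbing $\|M_f\|_{X\to L_r(\mu)}$, delivers (ii).

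The equivalence $(\mathrm{i})\Leftrightarrow(\mathrm{ii})$ is a mixed-norm analogue of Lemma \ref{dual}: $(\mathrm{ii})\Rightarrow(\mathrm{i})$ follows from two nested H\"older inequalities (discrete $\ell^r(\ell^p)$ vs $\ell^{r'}(\ell^{p'})$, then $Y$ vs $Y'$), while the reverse requires the dual formula
\[
\Bigl\|\bigl(\textstyle\sum_i(\sum_j|a_{i,j}|^p)^{r/p}\bigr)^{1/r}\Bigr\|_Y=\sup\Bigl\{\textstyle\sum_{i,j}\langle a_{i,j},b_{i,j}\rangle:\bigl\|(\sum_i(\sum_j|b_{i,j}|^{p'})^{r'/p'})^{1/r'}\bigr\|_{Y'}\le1\Bigr\},
\]
proved following Lemma \ref{dual}: pick $x'\in B_{Y'}$ attaining the $Y$-norm of $\varphi=(\sum_i(\sum_j|a_{i,j}|^p)^{r/p})^{1/r}$, and define $b_{i,j}$ pointwise as $|a_{i,j}|^{p-1}\mathrm{sgn}(a_{i,j})\cdot(\sum_j|a_{i,j}|^p)^{r/p-1}\cdot\varphi^{1-r}\cdot x'$; a direct exponent count using $(p-1)p'=p$ and $r'(r-1)=r$ verifies both that $(\sum_i(\sum_j|b_{i,j}|^{p'})^{r'/p'})^{1/r'}=x'$ and that the pairing equals $\|\varphi\|_Y$.

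For $(\mathrm{i})\Rightarrow(\mathrm{iii})$, write $A^r=\sum_i(\sum_j|x_{i,j}|^q)^{r/q}$ and $B^{r'}=\sum_i(\sum_j|y'_{i,j}|^{p'})^{r'/p'}$; by $r$-convexity of $X$ and $r'$-convexity of $Y'$ (from the $r$-concavity of $Y$), the concavifications $X_{[r]}$ and $(Y')_{[r']}$ are Banach lattices with $\|A\|_X^r=\|A^r\|_{X_{[r]}}$ and likewise for $B$. Combining (i) with Young's inequality after the scale-invariant substitution $x\mapsto\lambda x$, $y'\mapsto y'/\lambda$ gives
\[
\textstyle\sum_{i,j}|\langle Tx_{i,j},y'_{i,j}\rangle|\le\tfrac{K}{r}\|A^r\|_{X_{[r]}}+\tfrac{K}{r'}\|B^{r'}\|_{(Y')_{[r']}}.
\]
Apply Ky Fan's lemma on $U=B^+_{(X_{[r]})'}\times B^+_{((Y')_{[r']})'}$ to the affine weak*-continuous family
\[
\Phi_{(x),(y')}(f,g):=\textstyle\sum_{i,j}|\langle Tx_{i,j},y'_{i,j}\rangle|-\tfrac{K}{r}\int A^rf\,d\mu-\tfrac{K}{r'}\int B^{r'}g\,d\nu.
\]
Closure under convex combinations is obtained by stacking two matrices with entry rescalings $\alpha^{1/r},(1-\alpha)^{1/r}$ on the $x$-side and $\alpha^{1/r'},(1-\alpha)^{1/r'}$ on the $y'$-side, so that pairings scale by $\alpha^{1/r+1/r'}=\alpha$ while $A^r,B^{r'}$ scale linearly. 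Ky Fan then yields $(f_0,g_0)\in U$ satisfying the last inequality uniformly in the matrices; inverting Young by optimizing the scale produces
\[
\textstyle\sum_{i,j}|\langle Tx_{i,j},y'_{i,j}\rangle|\le K\bigl(\int A^rf_0\,d\mu\bigr)^{1/r}\bigl(\int B^{r'}g_0\,d\nu\bigr)^{1/r'}.
\]
Setting $f=f_0^{1/r}$, $g=g_0^{1/r'}$, and $\hat T(h):=g^{-1}T(h/f)$, the substitutions $x_{i,j}=h_{i,j}/f$, $y'_{i,j}=k_{i,j}/g$ cancel $f^r=f_0$ and $g^{r'}=g_0$ inside the $q$- and $p'$-power sums, producing precisely condition (i) for $\hat T:L_r(\mu)\to L_r(\nu)$; by $(\mathrm{i})\Leftrightarrow(\mathrm{ii})$ in this $L_r$--$L_r$ setting $\hat T$ satisfies (ii), and the case $n=1$ gives its $(p,q)$-regularity. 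The principal technical obstacles are the scaling/stacking bookkeeping for the convex-combination check in Ky Fan (which must exactly match the Young exponents $1/r$ and $1/r'$), the explicit maximizer construction underlying the mixed-norm duality, and the routine handling of the zero sets of $f_0,g_0$ when extending $\hat T$ to all of $L_r(\mu)$.
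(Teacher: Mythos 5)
Your proposal is correct and follows essentially the same route as the paper: the equivalence (i)$\Leftrightarrow$(ii) via a mixed-norm version of Lemma \ref{dual}, and (i)$\Rightarrow$(iii) by Young's inequality plus Ky Fan's lemma on $B_{X_{[r]}}\times B_{(Y')_{[r']}}$, followed by the homogeneity argument and the definition $\hat T(h)=g^{-1}T(h/f)$. In fact you supply several details the paper leaves implicit (the stacking/rescaling check of concavity for the Ky Fan family, the explicit dual maximizer for the mixed norm, and the correct Young coefficients $K/r$ and $K/r'$), so the write-up is, if anything, more complete than the published argument.
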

\begin{proof} It can be seen as a consequence of the lemmata of the introductory sections that (i) and (ii) are equivalent. Let us prove that
(i) $\Rightarrow$ (iii). The argument is similar to the one in the proof of theorem \ref{thA}. Since $Y'$ is also $r'$-convex (with constant $1$), and writting the inequalities in (i) in the form
$$
 \sum_{i,j=1}^{n,m}  \big\| Tx_{i,j} \, y'_{i,j} \big\|_{L_1(\nu)} - K \frac rp \,\Big\|  \sum_{i=1}^n \big( \sum_{j=1}^m |x_{i,j}|^q \big)^\frac{r}{q}  \Big\|_{X_{[r]}}
$$
$$
 - K \frac rq
\,\Big\|   \sum_{i=1}^n \big( \sum_{j=1}^m |y'_{i,j}|^{p'} \big)^\frac{r'}{p'}  \Big\|_{(Y')_{[r']}} \le 0,
$$
 we can define a set of functions $\phi:B_{X_{[r]}} \times B_{(Y')_{[r]}} \to \mathbb R$ as
\begin{align*}
\phi(f,g):=  &\sum_{i=1}^n \sum_{j=1}^m \big\| Tx_{i,j} \, y'_{i,j} \big\|_{L_1(\nu)} -  \,K \frac rp \int \big( \sum_{i=1}^n \big( \sum_{j=1}^m |x_{i,j}|^q \big)^\frac{r}{q} \big) \, f \, d \mu\\
&-\, K \frac rq \int \big( \sum_{i=1}^n \big( \sum_{j=1}^m |y'_{i,j}|^{p'} \big)^\frac{r'}{p'} \big) \,g \, d \nu.
\end{align*}
These functions are continuous when the product of the weak* topologies are defined in the  product $B_{X_{[r]}} \times B_{(Y')_{[r']}}$, and convex; the family of all the functions (defined for each finite couple of matrices $(x_{i,j})_{i=1,j=1}^{n,m}$ and $(y'_{i,j})_{i=1,j=1}^{n,m}$) is concave, and so Ky Fan's Lemma we find two functions $f_0 \in B_{X_{[r]}} $ and $g_0 \in B_{(Y')_{[r']}}$ such that
$$
\big\| \sum_{j=1}^m | Tx_{j} \, y'_{j} | \big\|_{L_1(\nu)} \le  \,K \frac rp \int \big( \sum_{j=1}^m |x_{i,j}|^q \big)^\frac{r}{q}  \, f_0 \, d \mu+
\, K \frac rq \int \big( \sum_{j=1}^m |y'_{i,j}|^{p'} \big)^\frac{r'}{p'} \,g_0 \, d \nu.
$$
Using a standard argument based on the homogeneity of this expression, we get the inequality
$$
\big\| \sum_{j=1}^m | Tx_{j} \, y'_{j} | \big\|_{L_1(\nu)} \le  \,K  \Big( \int \big( \sum_{j=1}^m |x_{i,j}|^q \big)^\frac{r}{q}  \, f_0 \, d \mu \Big)^\frac 1r
\cdot \Big( \int \big( \sum_{j=1}^m |y'_{i,j}|^{p'} \big)^\frac{r'}{p'} \,g_0 \, d \nu \Big)^\frac 1{r'}.
$$
A standard argument already used in this paper based on Lemma \ref{dual} allow to find the inequality
$$
\Big\| \big( \sum_{j=1}^m | \frac{Tx_{j}}{g_1} |^p \big)^\frac 1p \Big\|_{L_r(\nu)} \le K  \Big( \int \big( \sum_{j=1}^m |x_{i,j}|^q \big)^\frac{r}{q}  \, f_1 \, d \mu \Big)^\frac 1r =
K  \Big\| \big( \sum_{j=1}^m |x_{i,j}|^q \big)^\frac{1}{q} f_1 \Big\|_{L_r(\mu)}
$$
for functions $f_1$ and $g_1$ defined as the $1/r$-powers of $f_0$ and $g_0$. This gives the factorization ---when the inequality is considered just for $m=1$ and all possible vectors $x$---, and the $(p,q)$-regularity of operator $\hat T$ from $L_r(\mu)$ to $L_r(\nu)$.

The converse is straightforward, and so the proof is finished.
\end{proof}

A factorization for $T$ as the one given in (iii) of Theorem \ref{t:factor2} is usually called a \textit{strong factorization of $T$ through $L^r$-spaces.}

The characterization of Marcinkiewicz-Zygmund type inequalities given in \cite{DJ} provides the following.

\begin{corollary}\label{c:MZ}
Assume that $1 \le r_1,r_2,p,q \le \infty$. Then $\mathcal{R}_{p,q}(L_{r_1},L_{r_2}) =L(L_{r_1},L_{r_2}) $ in the following cases.

\begin{itemize}

\item If $q \le r_1=r_2 \le p$.

\item If $r_1=r_2 =1 $ and $1 \le q \le p \le \infty$.

\item If $r_1=r_2 = \infty$ and $1 \le q \le p \le \infty$.

\item If $1 \le r_2 \le r_1 < 2$ and there exists  $t$ such that $q \le t \le p$ and   $r_1 < t \le 2$.

\item  If $2 < r_2 \le r_1 \le \infty$ and there exists $t$ such that $q \le t \le p$ and  $2 \le  t < r_2$.

\item If $1 \le r_2 \le 2 \le r_1 \le \infty$ and $q \le 2 \le p$.

\end{itemize}

\end{corollary}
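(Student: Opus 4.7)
The plan is to reduce each case to the existence of some intermediate index $t \in [q, p]$ such that every bounded operator $T : L_{r_1} \to L_{r_2}$ is $t$-regular. Once such a $t$ is found, Proposition \ref{p:pqregbasic}(2) yields immediately that $T$ is $(p, q)$-regular (since $p \ge t$ and $q \le t$), and hence $\mathcal{R}_{p,q}(L_{r_1}, L_{r_2}) = L(L_{r_1}, L_{r_2})$. So the whole proof becomes bookkeeping: in each of the six cases, exhibit an admissible $t$ inside $[q,p]$ and invoke the relevant $t$-regularity result.

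Two of the cases are essentially free. For case (1), $q \le r_1 = r_2 = r \le p$, I will take $t = r$; every $T : L_r \to L_r$ is trivially $r$-regular by the scalar Fubini identity $\|(\sum |Tx_i|^r)^{1/r}\|_{L_r}^r = \sum \|Tx_i\|_{L_r}^r \le \|T\|^r \|(\sum |x_i|^r)^{1/r}\|_{L_r}^r$. For case (6), $r_2 \le 2 \le r_1$ with $q \le 2 \le p$, I will take $t = 2$ and invoke Krivine's version of Grothendieck's inequality (recalled in Section \ref{prelim}), which guarantees that every bounded operator between Banach lattices is $2$-regular with $\rho_{2,2}(T) \le K_G \|T\|$; the hypothesis $q \le 2 \le p$ is exactly what places $t = 2$ in the admissible interval.

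Cases (2)--(5) will be obtained directly from the Marcinkiewicz--Zygmund characterization of Defant--Junge \cite{DJ}, which identifies precisely those triples $(r_1, r_2, t)$ for which every bounded $T : L_{r_1} \to L_{r_2}$ is $t$-regular. In the boundary situations $r_1 = r_2 = 1$ (case (2)) and $r_1 = r_2 = \infty$ (case (3)), their result gives $t$-regularity for every $t \in [1, \infty]$, so any $t \in [q, p]$ serves. For case (4), $1 \le r_2 \le r_1 < 2$, the admissible range provided by \cite{DJ} is $r_1 < t \le 2$, and the hypothesis produces such a $t$ inside $[q, p]$; case (5), $2 < r_2 \le r_1 \le \infty$, is the symmetric statement with admissible range $2 \le t < r_2$.

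The one point requiring care is translating between the $L_{r_1}(\ell^t)$--$L_{r_2}(\ell^t)$ formulation used in \cite{DJ} and our Definition \ref{d:pqregular}; but this is the standard K\"othe--Bochner identification $\|(x_i)_i\|_{L_r(\ell^t)} = \|(\sum |x_i|^t)^{1/t}\|_{L_r}$ already implicit in Lemma \ref{dense}. No new estimate is required beyond Proposition \ref{p:pqregbasic}(2), Krivine's theorem, and the cited Defant--Junge result; the main ``obstacle'' is simply verifying in each case that the hypothesis places an $r$-regularity index of \cite{DJ} inside the required interval $[q,p]$.
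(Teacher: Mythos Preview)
Your proposal is correct and is essentially the same approach as the paper's: the paper gives no detailed proof at all, simply stating that the result is provided by the Defant--Junge characterization in \cite{DJ}, and your write-up is precisely the natural unpacking of that citation via Proposition~\ref{p:pqregbasic}(2). The only (cosmetic) difference is that you handle cases~(1) and~(6) directly via the $r$-concavity/$r$-convexity of $L_r$ and Krivine's theorem rather than routing them through \cite{DJ}, which is a perfectly legitimate and arguably cleaner shortcut.
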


For  $r_1=r_2=r$ and using Theorem  \ref{t:factor2}, we obtain the following result.

\begin{corollary} \label{primpqreg}
Assume that $1 \le p,q \le \infty$. Let $1 <r < \infty$ and let $X$ be an $r$-convex Banach function space over $(\Omega,\Sigma,\mu)$, and $Y$ an $r$-concave Banach function space over $(\Omega',\Sigma',\nu)$ such that $X$ and $Y'$ are order continuous. Let $T:X \to Y$ be an operator.  Suppose that
$$
[q,p] \cap [\min\{r,2\}, \max\{r,2\}] \neq \emptyset.
$$

\vspace{0.3cm}

Then the  following assertions are equivalent.

\begin{itemize}

\item[(i)] There is a constant $K>0$ such that for each matrix  of elements $(x_{i,j})_{i=1,j=1}^{n,m}$ in $X$, respectively, the following inequality holds.
$$
\Big\|  \big( \sum_{i=1}^n \big( \sum_{j=1}^m |T x_{i,j}|^p \big)^\frac{r}{p} \big)^\frac1r \Big\|_Y
\le K  \Big\| \big( \sum_{i=1}^n \big( \sum_{j=1}^m |x_{i,j}|^q \big)^\frac{r}{q} \big)^\frac1r \Big\|_X.
$$

\item[(ii)]  There  is a strong factorization of $T$ through $L_r$-spaces.
\end{itemize}

\end{corollary}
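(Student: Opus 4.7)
The plan is to reduce the corollary to the combination of Theorem \ref{t:factor2} and Corollary \ref{c:MZ}, with the numerical hypothesis serving as a compatibility condition between them. One direction requires no work beyond invoking Theorem \ref{t:factor2}, while the converse leverages the automatic $(p,q)$-regularity of every bounded operator between $L_r$-spaces guaranteed by the hypothesis.

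For (i) $\Rightarrow$ (ii), I would simply apply Theorem \ref{t:factor2}: the matrix inequality in (i) is precisely its condition (ii), which is equivalent there to the strong factorization of $T$ through $L_r$-spaces with a $(p,q)$-regular intermediate operator. In particular, $T$ admits a strong factorization through $L_r$-spaces, giving (ii) of the present corollary. Note that this direction does not use the numerical hypothesis on $[q,p]\cap[\min\{r,2\},\max\{r,2\}]$ at all.

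For (ii) $\Rightarrow$ (i), the first step is a short case analysis showing that the hypothesis $[q,p]\cap[\min\{r,2\},\max\{r,2\}]\neq\emptyset$ lands, for $r_1=r_2=r$ with $1<r<\infty$, in the union of the positive cases of Corollary \ref{c:MZ}: if $r\in[q,p]$ one applies the first bullet; if $1<r<2$ and some $t\in(r,2]$ lies in $[q,p]$ one applies the fourth bullet; if $2<r<\infty$ and some $t\in[2,r)$ lies in $[q,p]$ one applies the fifth bullet; and the borderline $r=2$ with $q\leq 2\leq p$ is covered by the sixth bullet. In all cases, every bounded operator from $L_r$ into $L_r$ is $(p,q)$-regular. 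Consequently, given the strong factorization $T=M_g\circ \hat T\circ M_f$ supplied by (ii), the intermediate operator $\hat T:L_r(\sigma)\to L_r(\tau)$ is $(p,q)$-regular.

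The last step is to transfer the matrix inequality from $\hat T$ to $T$. Applying Theorem \ref{t:factor2} to $\hat T$ itself (both $L_r$-spaces are trivially $r$-convex and $r$-concave, with order continuous dual for $1<r<\infty$) yields the matrix inequality of type (i) for $\hat T$. Using the elementary pointwise identity
$$
\bigg(\sum_i \bigg(\sum_j |h\,z_{i,j}|^s\bigg)^{r/s}\bigg)^{\frac{1}{r}}= |h|\,\bigg(\sum_i \bigg(\sum_j |z_{i,j}|^s\bigg)^{r/s}\bigg)^{\frac{1}{r}}
$$
once with $h=f$, $s=q$ on the $X$-side and once with $h=g$, $s=p$ on the $Y$-side, together with the boundedness of the multiplication operators $M_f:X\to L_r(\mu)$ and $M_g:L_r(\nu)\to Y$ that is implicit in the strong factorization, delivers the required inequality for $T$ with constant $\|M_g\|\cdot\rho_{p,q}(\hat T)\cdot\|M_f\|$. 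The main (minor) obstacle is the book-keeping in the case analysis confirming that the numerical hypothesis exactly captures the positive cases of Corollary \ref{c:MZ} along the diagonal $r_1=r_2=r$; everything else is an application of previously established results plus a direct scaling computation.
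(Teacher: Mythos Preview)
Your proposal is correct and follows the paper's approach, which simply records that the corollary is the combination of Theorem~\ref{t:factor2} with the $r_1=r_2=r$ cases of Corollary~\ref{c:MZ}. One minor streamlining: in (ii)$\Rightarrow$(i), once Corollary~\ref{c:MZ} gives that $\hat T$ is $(p,q)$-regular, the factorization $T=M_g\circ\hat T\circ M_f$ is precisely condition (iii) of Theorem~\ref{t:factor2} for $T$ itself, so the matrix inequality for $T$ follows immediately from (iii)$\Rightarrow$(ii) there without the detour through the matrix inequality for $\hat T$ and the scaling computation.
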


Together with Theorem \ref{thA}, this corollary provides and equivalence among $(p,q)$-regular-type properties for the operator $T$, that may be understood as
a generalization of  Marcinkiewicz-Zygmund inequalities. Let us finish the section with this result.

\begin{corollary}
Under the assumptions on $p,q,r$, $X$ and $Y$ in Corollary \ref{primpqreg}, with
$$
[q,p] \cap [\min\{r,2\}, \max\{r,2\}] \neq \emptyset.
$$

The following statements are equivalent.

\begin{itemize}

\item[(i)] The operator $T:X\to Y$ is $r$-regular.

\item[(ii)] There is a constant $K>0$ such that for each matrix  of elements $(x_{i,j})_{i=1,j=1}^{n,m}$ in $X$, respectively, the following inequality holds.
$$
\Big\|  \big( \sum_{i=1}^n \big( \sum_{j=1}^m |T x_{i,j}|^p \big)^\frac{r}{p} \big)^\frac1r \Big\|_Y
\le K  \Big\| \big( \sum_{i=1}^n \big( \sum_{j=1}^m |x_{i,j}|^q \big)^\frac{r}{q} \big)^\frac1r \Big\|_X.
$$

\end{itemize}

\end{corollary}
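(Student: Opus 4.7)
The strategy is to reduce the equivalence to the $L_r$-to-$L_r$ setting by using two factorization results already at our disposal: Theorem~\ref{thA} (with $s=p=r$), which factorizes every $r$-regular operator through $L_r$-spaces, and Corollary~\ref{primpqreg}, which characterizes the mixed matrix inequality in~(ii) precisely as a strong $L_r$-factorization $T=M_g\circ\hat T\circ M_f$ with the middle operator $(p,q)$-regular. The intersection condition $[q,p]\cap[\min\{r,2\},\max\{r,2\}]\neq\emptyset$ enters through Corollary~\ref{c:MZ}, which under this hypothesis guarantees that every bounded linear operator $L_r(\mu)\to L_r(\nu)$ is automatically $(p,q)$-regular.

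For (i)$\Rightarrow$(ii), assume $T$ is $r$-regular. The assumptions on $X$ and $Y$ allow us to invoke Theorem~\ref{thA} with $s=p=r$, yielding a factorization $T=M_g\circ\hat T\circ M_f$ with $\hat T:L_r(\mu)\to L_r(\nu)$ bounded. Corollary~\ref{c:MZ} then promotes $\hat T$ to a $(p,q)$-regular operator, so $T$ admits a strong $L_r$-factorization in the sense of Theorem~\ref{t:factor2}(iii). Applying the implication (iii)$\Rightarrow$(ii) of Theorem~\ref{t:factor2} (equivalently, the implication (ii)$\Rightarrow$(i) of Corollary~\ref{primpqreg}) delivers the matrix inequality.

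For (ii)$\Rightarrow$(i), the implication (i)$\Rightarrow$(ii) of Corollary~\ref{primpqreg} furnishes a strong factorization $T=M_g\circ\hat T\circ M_f$ with $\hat T:L_r(\mu)\to L_r(\nu)$ being $(p,q)$-regular, and in particular bounded. The Fubini-type identity
$$
\Bigl\|\Bigl(\sum_{i=1}^n|h_i|^r\Bigr)^{1/r}\Bigr\|_{L_r}=\Bigl(\sum_{i=1}^n\|h_i\|_{L_r}^r\Bigr)^{1/r},
$$
valid in any $L_r$-space, turns boundedness of $\hat T$ automatically into $r$-regularity, with $\rho_{r,r}(\hat T)\le\|\hat T\|$. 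Since multiplication operators satisfy the lattice identity $\bigl(\sum_i|M_h u_i|^r\bigr)^{1/r}=|h|\bigl(\sum_i|u_i|^r\bigr)^{1/r}$ and are bounded $X\to L_r(\mu)$ and $L_r(\nu)\to Y$ respectively, composition gives $\rho_{r,r}(T)\le\|M_g\|\,\|\hat T\|\,\|M_f\|<\infty$, as required.

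The only point requiring real attention is matching the intersection condition to the case analysis in Corollary~\ref{c:MZ}: for $r=2$ the sixth item with $t=2$ applies directly, whereas for $1<r<2$ (resp.\ $2<r<\infty$) the fourth (resp.\ fifth) item applies with an intermediate exponent $t\in[q,p]$ chosen in $(r,2]$ (resp.\ $[2,r)$), which is nonempty precisely by the intersection hypothesis. Once this bookkeeping is settled, the remainder of the argument is a direct concatenation of the previously established results, and no genuinely new estimate is needed.
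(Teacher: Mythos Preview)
Your argument is correct and follows exactly the route the paper signals (combine Theorem~\ref{thA} with $s=p=r$ and Corollary~\ref{primpqreg}, using Corollary~\ref{c:MZ} to upgrade boundedness of the middle operator to $(p,q)$-regularity). The only slip is in your bookkeeping paragraph: the hypothesis $[q,p]\cap[r,2]\neq\emptyset$ (for $1<r<2$) need not produce a $t\in[q,p]\cap(r,2]$ --- take $p=r$ --- so when the intersection collapses to $\{r\}$ you must invoke the first item of Corollary~\ref{c:MZ} ($q\le r_1=r_2\le p$) rather than the fourth; this is harmless and the conclusion stands.
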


\section{Extension properties}

The definition of $(p,q)$-regular operator makes also sense for operators define d on a subspace of a Banach lattice: Given $X$, $Y$ Banach lattices, and $X_0\subset X$ a closed subspace, an operator $T:X_0\rightarrow Y$ is $(p,q)$-regular if there is $K>0$ such that
$$
\bigg\|\bigg(\sum_{i=1}^n|Tx_i|^p\bigg)^{\frac1p}\bigg\|\leq K \bigg\|\bigg(\sum_{i=1}^n|x_i|^q\bigg)^{\frac1q}\bigg\|,
$$
for every $(x_i)_{i=1}^n\subset X_0$ (with the obvious modification when $p$ or $q$ are infinite).

In \cite[Theorem 4]{Pisier} it is shown that every $\infty$-regular operator defined on a closed subspace of a Banach lattice with values in another Banach lattice extends to a $\infty$-regular operator on the whole Banach lattice. This extension property also holds for $(\infty,q)$-regular operators as the following shows.

Before the proof, let us recall the Calder\'on product construction (cf. \cite{calderon}): for Banach lattices $X_0$, $X_1$ and $\theta\in(0,1)$ let $X_0^{1-\theta}X_1^\theta$ the space of elements $f$ for which there exist $f_0\in X_0$, $f_1\in X_1$ such that $|f|\leq |f_0|^{1-\theta}|f_1|^\theta$ and let
$$
\|f\|_{X_0^{1-\theta}X_1^\theta}=\inf\{ \|f_0\|^{1-\theta}\|f_1\|^\theta:|f|\leq |f_0|^{1-\theta}|f_1|^\theta,\,\textrm{with }f_i\in X_i\}.
$$
This expression defines a norm at least for the class of spaces which are of interest here (see \cite{calderon}).

\begin{lemma}\label{l:extension}
Let $1\leq q\leq \infty$, let $X$ be a $q$-convex Banach lattice of measurable functions on $(\Omega,\Sigma,\mu)$ (with $q$-convexity constant equal to one) and let $X_0\subset X$ be a closed subspace. Every $(\infty,q)$-regular operator $T:X_0\rightarrow \ell_q^n$ has an $(\infty,q)$-regular extension $\tilde T:X\rightarrow \ell_q^n$ with $\rho_{\infty,q}(\tilde T)\leq \rho_{\infty,q}(T)$.
\end{lemma}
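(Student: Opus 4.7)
The plan is to construct $\tilde T$ by transfinite induction (Zorn's lemma on the family of $(\infty,q)$-regular extensions of $T$ with constant $\le K := \rho_{\infty,q}(T)$ from intermediate subspaces of $X$ into $\ell_q^n$). The key step is to show that any such extension $S : X' \to \ell_q^n$ with $X_0 \subseteq X' \subsetneq X$ can be extended by one dimension: given $x_0 \in X \setminus X'$, one must choose $v_0 \in \ell_q^n$ so that $S'(y + tx_0) := Sy + tv_0$ satisfies $\rho_{\infty,q}(S') \le K$. This amounts to finding $v_0$ so that for every finite family $(y_i, t_i)_{i=1}^m \subseteq X' \times \mathbb R$,
\[
\sum_{j=1}^n \max_i |S_j y_i + t_i v_{0,j}|^q \le K^q \Big\|\Big(\sum_i |y_i + t_i x_0|^q\Big)^{1/q}\Big\|_X^q. \quad (\star)
\]
For each such family the set of admissible $v_0$'s is closed and convex in $\ell_q^n$ (the left-hand side is convex in $v_0$), and the specialisation $y_i = 0$, $t_1 = 1$ forces every admissible $v_0$ into the ball of radius $K\|x_0\|_X$, which is compact since $\ell_q^n$ is finite-dimensional. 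A finite intersection property argument then reduces the problem to showing that for each single finite family the admissible set is nonempty.

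The nonemptiness of the admissible set for a fixed family is the analytical heart of the proof, and I would establish it by a Ky Fan-type minimax argument in the spirit of the proofs of Theorems \ref{thA} and \ref{t:factor2}. The crucial structural ingredient is the Calderón product identity $X = L_\infty(\mu)^{1-1/q} \cdot X_{[q]}^{1/q}$, which holds because $X$ is $q$-convex with constant $1$, paired with the parallel factorization $\ell_q^n = (\ell_\infty^n)^{1-1/q} \cdot (\ell_1^n)^{1/q}$ of the target. Convex duality recasts $\min_{v_0} \sum_j \max_i |S_j y_i + t_i v_{0,j}|^q$ as a supremum of expressions $\sum_j |S_j z_j|^q$ over tuples $z_j = \sum_i \alpha_{j,i}(y_i + t_i x_0) \in X'$, where the matrix $(\alpha_{j,i})$ satisfies row $\ell_1$ bounds $\sum_i |\alpha_{j,i}| \le 1$ and the cancellation $\sum_i \alpha_{j,i} t_i = 0$; the latter is what forces the $z_j$ back into $X'$. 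The $(\infty,q)$-regularity of $S$ is then applied to $(z_j)_{j=1}^n \subseteq X'$, and the Calderón identity converts the resulting bound into one involving the $X$-norm of $(\sum_i |y_i + t_i x_0|^q)^{1/q}$ as required in $(\star)$.

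The main obstacle is precisely this last comparison step. Because the outer $\ell_q^n$-norm in $(\star)$ couples the $n$ coordinates of $v_0$, a naive coordinate-by-coordinate Hahn-Banach extension of each linear functional $S_j$ preserves the individual norms $\|S_j\|_{(X')^*}$ but need not preserve the joint $(\infty,q)$-regular constant, and a direct manipulation of the dual matrix $(\alpha_{j,i})$ loses a factor depending on $n$. The $q$-convexity of $X$ with constant one, packaged in the Calderón factorization, is exactly what is needed to pair the coordinates correctly and obtain the sharp constant in $(\star)$.
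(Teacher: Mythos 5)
Your reduction of the lemma to a one--dimensional extension step is sound in outline: Zorn's lemma applies (the union of a chain of extensions with constant $K$ is again one, since each defining inequality involves finitely many vectors), the admissible sets are indeed closed and convex, the specialisation $y_i=0$, $t_1=1$ traps them in a compact ball of $\ell_q^n$, and the finite--intersection--property argument correctly reduces everything to the nonemptiness of the admissible set for a single family. But that nonemptiness is the entire content of the lemma, and it is asserted rather than proved. After your minimax step, what must be shown is
$$
\sum_{j=1}^n S_j z_j \;\le\; K\,\Big\|\Big(\sum_{i=1}^m |y_i+t_ix_0|^q\Big)^{1/q}\Big\|_X,
\qquad z_j=\sum_{i=1}^m\gamma_{j,i}(y_i+t_ix_0)\in X',
$$
for every matrix $(\gamma_{j,i})$ with $\sum_i\gamma_{j,i}t_i=0$ for each $j$ and $\big\|(\sum_i|\gamma_{j,i}|)_j\big\|_{\ell_{q'}^n}\le 1$. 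Since the vectors $y_i+t_ix_0$ do not lie in $X'$, one cannot expand $S_jz_j$ over $i$; the only access to $S$ is its $(\infty,q)$-regularity applied to the $z_j\in X'$, reached through the pointwise domination $|z_j|\le\sum_i|\gamma_{j,i}|\,|y_i+t_ix_0|$. Every estimate of this kind (including the H\"older/Calder\'on splitting $|z_j|\le\beta_j^{1/q'}(\sum_i|\gamma_{j,i}|\,|y_i+t_ix_0|^q)^{1/q}$ with $\beta_j=\sum_i|\gamma_{j,i}|$) ends up governed by the \emph{column} sums $\sum_j|\gamma_{j,i}|$, which your hypotheses bound only by $n^{1/q}$, not by $1$. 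This is exactly the dimensional loss you acknowledge in your last paragraph; the claim that ``the Calder\'on identity converts the resulting bound'' into the sharp one is a placeholder, not an argument. (The displayed inequality is in fact true, but the natural proof deduces it from the extension theorem itself --- extend $S$ to all of $X$ and then expand over $i$ --- which is circular here.)

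The paper sidesteps the coordinate--coupling problem by linearising instead of extending dimension by dimension: it equips $Z=\ell_{q'}^n\otimes X$ with $\|v\|_Z=\inf\big\{\|(a_i)\|_{q'}\,\|(\sum_i|x_i|^q)^{1/q}\|_X\big\}$ over one--term--per--coordinate representations $v=\sum_i a_ie_i\otimes x_i$, identifies this expression with the Calder\'on product norm of $E_0^{1-1/q}E_1^{1/q}$ (this is where $q$-convexity with constant one is used, to show $\|\cdot\|_Z$ is subadditive), proves $Z^*=R_{\infty,q}(X,\ell_q^n)$ isometrically, and then applies the ordinary scalar Hahn--Banach theorem to the functional induced by $T$ on the subspace of tensors with entries in $X_0$. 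If you want to rescue your scheme, the realistic path is to first establish this duality $Z^*=R_{\infty,q}(X,\ell_q^n)$ --- at which point the one--step induction becomes unnecessary.
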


\begin{proof}
We  follow a similar approach to that of \cite[Theorem 4]{Pisier}. Let $Z$ be the tensor product $\ell_{q'}^n\otimes X$, where $\frac1q+\frac1{q'}=1$, endowed with
$$
\|v\|_Z=\inf\Big\{\Big(\sum_{i=1}^n|a_i|^{q'}\Big)^{\frac{1}{q'}}\Big\|\Big(\sum_{i=1}^n |x_i|^q\Big)^{\frac1q}\Big\|_X:\, v=\sum_{i=1}^n a_ie_i\otimes x_i\Big\}.
$$
Let us see that $\|\cdot\|_Z$ indeed defines a norm.

Let $E_0$ be  the space of $n$-tuples of functions of $L_\infty(\mu)$ endowed with the norm
$$
\| (g_i)_{i=1}^n\|_{E_0}=\sum_{i=1}^n \|g_i\|_\infty.
$$

Let $E_1$ be the space of $n$-tuples of measurable functions $(h_i)_{i=1}^n\subset L_0(\mu)$ such that $|h_i|^{\frac1q}\in X$ for $i=1,\ldots,n$, equipped with
$$
\| (h_i)_{i=1}^n\|_{E_1}=\Big\|\Big(\sum_{i=1}^n |h_i|\Big)^{\frac1q}\Big\|_{X}^{q}.
$$
This is indeed a norm since $X$ is $q$-convex with constant $1$.

We claim that $\|\cdot\|_Z$ coincides with the norm of the space $E_0^{1-\theta}E_1^\theta$ for $\theta=\frac1q$ under the identification mapping $(f_i)_{i=1}^n\in E_0^{1-\theta}E_1^\theta$ to $\sum_{i=1}^n e_i\otimes f_i\in\ell_{q'}^n\otimes X$.

Indeed, note first that for $1\leq i\leq n$, there exist $g_i\in L_\infty(\mu)$ and $h_i\in L_0(\mu)$ with $|h_i|^\theta=|h_i|^{\frac1q}\in X$, such that $|f_i|\leq |g_i|^{1-\theta}|h_i|^\theta$, which yield that $f_i\in X$ for $1\leq i\leq n$, and in particular $\sum_{i=1}^n e_i\otimes f_i\in\ell_{q'}^n\otimes X$. Now, given $\varepsilon>0$, let $(g_i)_{i=1}^n$ and $(h_i)_{i=1}^n$ as above so that
$$
\|f\|_{E_0^{1-\theta}E_1^\theta}\geq\Big(\sum_{i=1}^n\|g_i\|_\infty\Big)^{\frac{1}{q'}}\Big\|\Big(\sum_{i=1}^n|h_i|\Big)^{\frac1q}\Big\|_X-\varepsilon.
$$
Set $a_i=\|g_i\|_\infty^{\frac{1}{q'}}$, and $A_i=\{\omega\in \Omega: g_i(\omega)\neq 0\}$. Since $f_i=f_i\chi_{A_i}$, it follows that
$$
|h_i|\geq\frac{|f_i|^q}{|g_i|^{\frac{q}{q'}}}\chi_{A_i}.
$$
Hence,
$$
\Big\|\Big(\sum_{i=1}^n|h_i|\Big)^{\frac1q}\Big\|_X\geq\Big\|\Big(\sum_{i=1}^n\frac{|f_i|^q}{|g_i|^{\frac{q}{q'}}}\chi_{A_i}\Big)^{\frac1q}\Big\|_X\geq \Big\|\Big(\sum_{i=1}^n\frac{|f_i|^q}{a_i^{q}}\Big)^{\frac1q}\Big\|_X.
$$
Thus,
$$
\|f\|_{E_0^{1-\theta}E_1^\theta}+\varepsilon\geq\Big(\sum_{i=1}^na_i^{q'}\Big)^{\frac{1}{q'}}\Big\|\Big(\sum_{i=1}^n\Big|\frac{f_i}{a_i}\Big|^{q}\Big)^{\frac1q}\Big\|_X\geq \Big\|\sum_{i=1}^n a_ie_i\otimes \frac{f_i}{a_i}\Big\|_Z=\Big\|\sum_{i=1}^n e_i\otimes f_i\Big\|_Z,
$$
and as $\varepsilon>0$ is arbitrary, the inequality 
$$
\|f\|_{E_0^{1-\theta}E_1^\theta}\geq \Big\|\sum_{i=1}^n e_i\otimes f_i\Big\|_Z
$$ 
follows. For the converse inequality, let $\varepsilon>0$ and $f=\sum_{i=1}^n a_ie_i\otimes x_i$ with 
$$
\|f\|_Z+\varepsilon\geq \Big(\sum_{i=1}^n |a_i|^{q'}\Big)^{\frac{1}{q'}}\Big\|\Big(\sum_{i=1}^n|x_i|^{q}\Big)^{\frac1q}\Big\|_X.
$$
Let $h_i=|x_i|^q$ and $g_i=|a_i|^{q'}\chi_\Omega$. Hence, as $|f_i|=|a_ix_i|\leq (|a_i|^{q'})^{\frac{1}{q'}}(|h_i|)^{\frac1q}=|g_i|^{1-\theta}|h_i|^\theta$, it follows that
$$
\|f\|_Z+\varepsilon\geq \Big(\sum_{i=1}^n \|g_i\|_\infty\Big)^{\frac1{q'}}\Big\|\Big(\sum_{i=1}^n|h_i|\Big)^{\frac1q}\Big\|_X\geq \| (g_i)_{i=1}^n\|_{E_0}^{1-\theta}\|(h_i)_{i=1}^n\|_{E_1}^\theta\geq\|f\|_{E_0^{1-\theta}E_1^\theta}.
$$
As $\varepsilon>0$ is arbitrary, the claim follows.

Now, we claim that $Z^*=R_{\infty,q}(X,\ell_q^n)$ isometrically. Indeed, given $u:X\rightarrow \ell_q^n$ we have
\begin{align*}
\rho_{\infty,q}(u)&=\sup\bigg\{\Big\|\bigvee_{i=1}^m|u x_i|\Big\|_{\ell_q^n}: \Big\|\Big(\sum_{i=1}^m|x_i|^q\Big)^{\frac1q}\Big\|_X\leq1,\,  m\in\mathbb N\bigg\}\\
&=\sup\bigg\{\Big\|\sum_{k=1}^n\Big(\bigvee_{i=1}^m|\langle e_k^*,u x_i\rangle|\Big)e_k\Big\|_{\ell_q^n}: \Big\|\Big(\sum_{i=1}^m|x_i|^q\Big)^{\frac1q}\Big\|_X\leq1,\,  m\in\mathbb N\bigg\}\\
&=\sup\bigg\{\sum_{k=1}^n a_k\bigvee_{i=1}^m|\langle e_k^*,u x_i\rangle|: \Big\|\sum_{k=1}^n a_k e_k^*\Big\|_{\ell_{q'}^n},\,\Big\|\Big(\sum_{i=1}^m|x_i|^q\Big)^{\frac1q}\Big\|_X\leq1,\,  m\in\mathbb N\bigg\}\\
&=\sup\bigg\{\Big|\sum_{k=1}^n a_k \langle e_k^*,u x_{i_k} \rangle\Big|:\Big(\sum_{k=1}^n |a_k|^{q'} \Big)^{\frac{1}{q'}},\, \Big\|\Big(\sum_{k=1}^n |x_{i_k}|^q\Big)^{\frac1q}\Big\|_X\leq1,  (i_k)_{k=1}^n\subset[1,m]\bigg\}\\
&= \sup\{|\langle u,v\rangle|:\|v\|_Z\leq1\}=\|u\|_{Z^*}.
\end{align*}

Finally, consider the subspace $M\subset Z$ formed by all $v=\sum_{k=1}^n a_k e_k\otimes x_k$ such that $x_k\in X_0$ for $k=1,\ldots,n$. Let $T:X_0\rightarrow Y$ be a $(\infty,q)$-regular operator. Given $v\in M$, and $\varepsilon>0$, take scalars $a_k$ and $x_k\in X_0$ such that
$$
\Big(\sum_{k=1}^n|a_k|^{q'}\Big)^{\frac{1}{q'}}\Big\|\Big(\sum_{k=1}^n|x_k|^q\Big)^{\frac1q}\Big\|\leq \|v\|_Z+\varepsilon.
$$
By \cite[Proposition 1.d.2.]{LT2}, we have
\begin{align*}
|\langle T,v\rangle|&=\Big|\sum_{k=1}^n\langle T(x_k),a_k e_k\rangle\Big|\\
&\leq \langle\bigvee_{k=1}^n|T(x_k)|,\sum_{k=1}^n|a_k | e_k \rangle\\
&\leq \Big\|\bigvee_{k=1}^n|T(x_k)|\Big\|_{\ell_q^n}\Big\|\sum_{k=1}^n|a_k| e_k \Big\|_{\ell^n_{q'}}\\
&\leq\rho_{\infty,q}(T)(\|v\|_Z+\varepsilon).
\end{align*}

Since $\varepsilon>0$ is arbitrary, it follows that for every $v\in M$ we get $|\langle T,v\rangle| \leq\rho_{\infty,q}(T)$. Hence, we can consider a Hahn-Banach extension of $v\in M\mapsto \langle T,v\rangle$ with norm not exceeding $\rho_{\infty,q}(T)$. This extension is clearly of the form $v\in Z\mapsto\langle \tilde T,v\rangle$ for some operator $\tilde T:X\rightarrow \ell_q^n$ with the required properties.
\end{proof}

\begin{theorem}\label{t:extension}
Let $1\leq q\leq \infty$, and measure spaces $(\Omega,\Sigma,\mu)$, $(\Omega',\Sigma',\nu)$. Given a closed subspace $X_0\subset L_q(\mu)$ and an $(\infty,q)$-regular operator $T:X_0\rightarrow L_q(\nu)$, there is an $(\infty,q)$-regular extension $\tilde{T}:L_q(\mu)\rightarrow L_q(\nu)$ with $\rho_{\infty,q}(\tilde T)=\rho_{\infty,q}(T)$.
\end{theorem}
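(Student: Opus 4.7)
The approach is to reduce the extension problem to the finite-dimensional range case handled by Lemma \ref{l:extension}, via conditional expectations, and then pass to a limit.

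For each finite $\sigma$-subalgebra $\mathcal{A}$ of $\Sigma'$ with $n$ atoms, the conditional expectation $E_\mathcal{A}: L_q(\nu) \to L_q(\mathcal{A}, \nu)$ is a positive contraction, and $L_q(\mathcal{A}, \nu)$ is lattice-isometric to $\ell_q^n$. From the pointwise inequality $\bigvee_i |E_\mathcal{A} f_i| \le E_\mathcal{A} \bigvee_i |f_i|$, which follows from $|E_\mathcal{A} f_i| \le E_\mathcal{A}|f_i| \le E_\mathcal{A}\bigvee_j|f_j|$, the composition $E_\mathcal{A} T: X_0 \to L_q(\mathcal{A}, \nu) \cong \ell_q^n$ is $(\infty, q)$-regular with $\rho_{\infty,q}(E_\mathcal{A} T) \le \rho_{\infty,q}(T)$. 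Since $L_q(\mu)$ is $q$-convex with constant one, Lemma \ref{l:extension} yields extensions $\tilde T_\mathcal{A}: L_q(\mu) \to L_q(\mathcal{A}, \nu) \hookrightarrow L_q(\nu)$ satisfying $\rho_{\infty,q}(\tilde T_\mathcal{A}) \le \rho_{\infty,q}(T)$ and $\tilde T_\mathcal{A}|_{X_0} = E_\mathcal{A} T$.

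Next I would fix a free ultrafilter $\mathcal{U}$ on the directed set of finite subalgebras (refining the order filter of refinement), and define $\tilde T x := \lim_{\mathcal{U}} \tilde T_\mathcal{A} x$, where the limit is taken in the weak topology when $1 < q < \infty$ (reflexivity of $L_q(\nu)$) or in the weak-$*$ topology when $q = \infty$ (using $L_\infty(\nu) = L_1(\nu)^*$). For $x \in X_0$, $\tilde T_\mathcal{A} x = E_\mathcal{A} T x \to Tx$ in $L_q$-norm by martingale convergence (density of finite-$\sigma$-algebra step functions in $L_q(\nu)$), so $\tilde T|_{X_0} = T$. The bound $\rho_{\infty, q}(\tilde T) \le \rho_{\infty, q}(T)$ follows from the dual characterisation
$$\Big\|\bigvee_{i=1}^m |\tilde T x_i|\Big\|_{L_q(\nu)} = \sup\Big\{\sum_{i=1}^m \int y_i^*\, \tilde T x_i\, d\nu : \Big\|\sum_{i=1}^m |y_i^*|\Big\|_{L_{q'}(\nu)} \le 1\Big\},$$
since, for any fixed admissible family $\{y_i^*\}$, the finite sum commutes with $\lim_{\mathcal{U}}$, and each term of the net is bounded by $\rho_{\infty,q}(T)\, \|(\sum_i|x_i|^q)^{1/q}\|_{L_q(\mu)}$ thanks to the $(\infty,q)$-regularity of $\tilde T_\mathcal{A}$. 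The reverse inequality $\rho_{\infty,q}(\tilde T) \ge \rho_{\infty,q}(T)$ is immediate because $\tilde T$ extends $T$, giving the claimed equality.

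The main obstacle is the case $q = 1$: $L_1(\nu)$ is not reflexive, so weak compactness of bounded sets can fail. The ultrafilter limit \emph{a priori} lives in $L_1(\nu)^{**}$, and one must verify that it in fact belongs to $L_1(\nu)$, i.e., that the associated finitely additive set function is $\nu$-absolutely continuous and countably additive. This can be handled by exploiting the martingale structure of the approximants $\tilde T_\mathcal{A}$ --- one expects $\{\tilde T_\mathcal{A} x\}$ to be uniformly integrable, so that Dunford--Pettis provides relative weak compactness in $L_1(\nu)$ itself --- or alternatively by replacing ultrafilter limits with a suitable Banach space ultraproduct construction that sidesteps the non-reflexivity.
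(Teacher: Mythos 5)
Your proposal follows essentially the same route as the paper: discretise the range by conditional expectations onto finite subalgebras (the paper uses dyadic partitions of $[0,1]$, so its $J_nP_n$ is exactly your $E_{\mathcal A}$), apply Lemma \ref{l:extension} to each finite-rank compression, and recover $\tilde T$ as a weak limit along an ultrafilter, with the regularity bound surviving the limit by the duality with $L_{q'}$. Your extra care at the endpoints is warranted --- the paper simply invokes reflexivity, which covers only $1<q<\infty$ --- and for $q=1$, where uniform integrability of the net $\tilde T_{\mathcal A}x$ is unclear off $X_0$, a clean fix is to take the limit weak-$*$ in $L_1(\nu)^{**}$ and compose with the positive contractive band projection $L_1(\nu)^{**}\to L_1(\nu)$, which preserves the $(\infty,1)$-regularity estimate.
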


\begin{proof}
For simplicity, assume $\Omega=\Omega'=[0,1]$ endowed with Lebesgue measure. The proof can be easily carried over to general measure spaces. For every $n\in \mathbb N$, let $P_n:L_q\rightarrow \ell_q^{2^n}$ be given by
$$
P_n f=2^{\frac{n}{q'}}\sum_{i=1}^{2^n}\int_{\frac{i-1}{2^n}}^{\frac{i}{2^n}} f d\nu \cdot e_i.
$$
Notice that the norm of $P_n$ is less or equal than one. Also, let $J_n:\ell_q^{2^n}\rightarrow L_q$ be given by
$$
J_n e_i=2^{\frac{n}{q}}\chi_{[\frac{i-1}{2^n},\frac{i}{2^n}]},
$$
for $1\leq i\leq 2^n$ and extended linearly.  Its norm is equal to one: in fact it is an isometry.

Given a closed subspace $X_0\subset L_q(\mu)$ and an $(\infty,q)$-regular operator $T:X_0\rightarrow L_q(\nu)$, for each $n\in\mathbb N$, consider $T_n=P_n T:X_0\to \ell_q^{2^n}$. Since $P_n$ is positive, we clearly have $\rho_{\infty,q}(T_n)\leq\rho_{\infty,q}(T)$, so by Lemma \ref{l:extension}, there is an extension $\tilde T_n:L_q\to \ell_q^{2^n}$ with $\rho_{\infty,q}(\tilde T_n)\leq\rho_{\infty,q}(T)$.

Given $f\in L_q(\mu)$, the sequence $(J_n\tilde T_n f)_{n\in\mathbb N}$ is bounded in $L_q$-norm. By reflexivity, it has a weakly convergent subsequence. Let $\mathcal U$ be a free ultrafilter in $\mathbb N$ and for each $f\in L_q(\mu)$, let
$$
\tilde T f=\lim_{n\in \mathcal U} J_n\tilde T_n f,
$$
the limit taken in the weak topology along the ultrafilter $\mathcal U$.

It is clear that $\tilde T$ defines a bounded linear operator on $L_q$. We claim that $\tilde T$ is the required extension.

Indeed, for $f\in X_0$, note that
$$
J_n\tilde T_n f= J_n P_nT f\underset{n\to\infty}{\longrightarrow} Tf,
$$
which implies in particular that $\tilde T f=Tf$ for every $f\in X_0$.

Moreover, for every $(f_i)_{i=1}^m$ and every $g\in L_{q'}$ with $\|g\|_{q'}\leq 1$ we have
\begin{eqnarray*}
\Big\langle \bigvee_{i=1}^m|\tilde T f_i|, g\Big\rangle &=&\lim_{n\in\mathcal U}\Big\langle\bigvee_{i=1}^m |J_n\tilde T_n(f_i)|,g\Big\rangle\\
&\leq&\lim_{n\in\mathcal U}\Big\|\bigvee_{i=1}^m |J_n\tilde T_n(f_i)| \Big\|_q\\
&\leq&\lim_{n\in\mathcal U}\Big\|J_n\Big(\bigvee_{i=1}^m |\tilde T_n(f_i)|\Big)\Big\|_q\\
&\leq&\lim_{n\in\mathcal U}\Big\|\bigvee_{i=1}^m |\tilde T_n(f_i) | \Big\|_q\\
&\leq& \rho_{\infty,q}(T)\Big\|\Big(\sum_{i=1}^m|f_i|^q\Big)^{\frac1q}\Big\|.
\end{eqnarray*}
Thus,
$$
\Big\|\bigvee_{i=1}^m|\tilde T f_i|\Big\|\leq \rho_{\infty,q}(T)\Big\|\Big(\sum_{i=1}^m|f_i|^q\Big)^{\frac1q}\Big\|.
$$
\end{proof}

Note that by Corollary \ref{c:MZ}, every operator $T:L_q\to L_q$ is $(\infty,q)$-regular, while the projection onto the spam of the Rademacher sequence in $L_q$ is not $\infty$-regular.

\end{document}